\newtheorem{thm}{Theorem}[section]
\newtheorem{cor}[thm]{Corollary}
\newtheorem{lem}[thm]{Lemma}
\newtheorem{exm}[thm]{Example}
\newtheorem{prop}[thm]{Proposition}
\theoremstyle{definition}
\newtheorem{defn}[thm]{Definition}
\theoremstyle{remark}
\newtheorem{rem}[thm]{\bf Remark}
\numberwithin{equation}{section}
\begin{document}
\title[The extensions of  t-structures]{The extensions of t-structures}
\author[Xiao-Wu Chen, Zengqiang Lin, Yu Zhou] {Xiao-Wu Chen, Zengqiang Lin$^*$, Yu Zhou}

\subjclass[2010]{18G80, 18G20, 16E60}
\date{\today}
\thanks{$^*$The corresponding author}
\keywords{$t$-structure, extension, global dimension, distance}%

\maketitle
\date{}%
\dedicatory{}%
\commby{}%
\begin{center}
\end{center}

\begin{abstract}
We  reformulate a result of Bernhard Keller on extensions of $t$-structures and give a detailed proof. In the study of hereditary $t$-structures, the notions of regular $t$-structures and global dimensions arise naturally.
\end{abstract}

\section{Introduction}
Let $\mathcal{T}$ be a triangulated category with a fixed $t$-structure $\mathcal{U}$; see \cite{BBD}. Denote by $\mathcal{T}^b$ the smallest triangulated full subcategory  containing the heart of $\mathcal{U}$. Then $\mathcal{U}$ restricts to a bounded $t$-structure  on $\mathcal{T}^b$, which is denoted by $\mathcal{U}^b$.

The following  result of extending $t$-structures is stated in \cite[Subsection~6.1]{Kel05} without a proof: any t-structure $\mathcal{V}'$ on $\mathcal{T}^b$ satisfying $d(\mathcal{U}^b, \mathcal{V}')<+\infty$ extends canonically to a $t$-structure $\mathcal{V}$ on $\mathcal{T}$. Here, we denote by $d(\mathcal{U}^b, \mathcal{V}')$ the distance \cite{QW} between two $t$-structures $\mathcal{U}^b$ and $\mathcal{V}'$ on $\mathcal{T}^b$.

The goal of this paper is to reformulate the above result as a bijective correspondence, and give a detailed proof.
\vskip 5pt

\noindent {\bf Theorem} (Keller).\; \emph{Keep the notation as above. Then there is a bijective correspondence
$$\left\{
\begin{aligned}
& t\mbox{-structures } \mathcal{V} \mbox{ on }  \mathcal{T} \\
& \mbox{with } d(\mathcal{U}, \mathcal{V})<+\infty
 \end{aligned} \right\} \longrightarrow \left
 \{
 \begin{aligned}
 & t\mbox{-structures } \mathcal{V}'   \mbox{ on }   \mathcal{T}^b
  \\
  &\mbox{with } d(\mathcal{U}^b, \mathcal{V}')<+\infty
  \end{aligned}
  \right \},
 $$
 sending $\mathcal{V}$ to its restriction $\mathcal{V}^b$ on $\mathcal{T}^b$.}

\vskip 5pt

Following \cite{Kel05}, we might view $\mathcal{V}$ as a \emph{canonical extension} of the  $t$-structure $\mathcal{V}^b$ on $\mathcal{T}^b$. Since $d(\mathcal{U}^b, \mathcal{V}^b)<+\infty$, the restricted $t$-structure $\mathcal{V}^b$ on $\mathcal{T}^b$ is necessarily bounded. We mention that the condition $ d(\mathcal{U}, \mathcal{V})<+\infty$ is needed to guarantee that  the restriction $\mathcal{V}^b$ is well defined; see Proposition~\ref{prop:dist-finite}.

The above correspondence plays a role in the proof of the triangulated structure of the orbit category, including the cluster category for an acyclic quiver; see  \cite{Kel05}. The main concern of \cite{Kel05} is hereditary $t$-structures. However, in the above correspondence, the hereditariness of $\mathcal{V}^b$ does not imply  the one of $\mathcal{V}$ in general. We observe that if the fixed $t$-structure $\mathcal{U}$ is \emph{regular}, then the above correspondence restricts a bijective correspondence between hereditary $t$-structures; see Proposition~\ref{prop:t-heredi}. We refer to Definition~\ref{defn:adm} for regular $t$-structures.

We mention the work \cite{MZ}, which studies extensions of $t$-structures on the bounded derived category of finitely generated modules to the ones on the unbounded derived category of arbitrary modules.

The paper is structured as follows. In Section~2, we recall basic facts on $t$-structures, and introduce regular $t$-structures. We study hereditary $t$-structures and the global dimension of a $t$-structure in Section~3. We study the distance between $t$-structures in Section~4. In Proposition~\ref{prop:dist-finite}, we prove that two $t$-structures with a finite distance share many properties. In the final section, we prove the theorem above.

\section{Preliminaries on t-structures}

Let $\mathcal{T}$ be a triangulated category. We denote by $\Sigma$ its translation functor and by $\Sigma^{-1}$ its quasi-inverse. Consequently, $\Sigma^n$ is defined for each integer $n$.

For a full subcategory $\mathcal{X}$ of $\mathcal{T}$, the right orthogonal subcategory is defined as $ \mathcal{X}^\perp=\{Z\in \mathcal{T}\; |\; {\rm Hom}_\mathcal{T}(X, Z)=0 \mbox{ for any } X\in \mathcal{X}\}$. The left  orthogonal subcategory $^\perp \mathcal{X}$ is defined analogously. For another full subcategory $\mathcal{Y}$ of $\mathcal{T}$, we set
$$\mathcal{X}\ast \mathcal{Y}=\{Z\in \mathcal{T}\; |\; \exists \mbox{ an exact triangle } X\rightarrow Z\rightarrow Y \rightarrow \Sigma(X) \mbox{ with } X\in \mathcal{X}, \mathcal{Y}\in \mathcal{Y}\}.$$
The operation $\ast$ is associative; see \cite[Lemma~1.3.10]{BBD}. By ${\rm Hom}_\mathcal{T}(\mathcal{X}, \mathcal{Y})=0$, we mean that ${\rm Hom}_\mathcal{T}(X, Y)=0$ for any $X\in \mathcal{X}$ and $Y\in \mathcal{Y}$.

Recall from \cite[Definition~1.3.1]{BBD} that a \emph{t-structure} $\mathcal{U}=(\mathcal{U}_{\leq 0}, \mathcal{U}_{\geq 0})$ of $\mathcal{T}$ consists of two full subcategories, which are subject to the following conditions:
\begin{enumerate}
\item[(T1)] $\Sigma\mathcal{U}_{\leq 0}\subseteq \mathcal{U}_{\leq 0}$ and $\Sigma^{-1}\mathcal{U}_{\geq 0}\subseteq \mathcal{U}_{\geq 0}$;
\item[(T2)] ${\rm Hom}_\mathcal{T}(\mathcal{U}_{\leq 0}, \Sigma^{-1}\mathcal{U}_{\geq 0})=0$;
\item[(T3)] $\mathcal{T}=\mathcal{U}_{\leq 0}* (\Sigma^{-1}\mathcal{U}_{\geq 0})$, that is, any object $Z\in \mathcal{T}$ fits into an exact triangle $$X\rightarrow Z\rightarrow \Sigma^{-1}(Y)\rightarrow \Sigma(X)$$ for some $X\in \mathcal{U}_{\leq 0}$ and $Y\in \mathcal{U}_{\geq 0}$.
\end{enumerate}
The \emph{heart} of the $t$-structure is defined to be $\mathcal{U}_0=\mathcal{U}_{\leq 0}\cap \mathcal{U}_{\geq 0}$; it is an abelian category; see \cite[Theorem~1.3.6]{BBD}.

For each integer $n$, we set $\mathcal{U}_{\leq n}=\Sigma^{-n}\mathcal{U}_{\leq 0}$ and $\mathcal{U}_{\geq n}=\Sigma^{-n}\mathcal{U}_{\geq 0}$. We observe that $(\mathcal{U}_{\leq n}, \mathcal{U}_{\geq n})$ is a $t$-structure for any integer $n$, which will be denoted by $\Sigma^{-n}(\mathcal{U})$. We have
\begin{align}\label{equ:ortho}
\mathcal{U}_{\leq n}={^\perp(\mathcal{U}_{\geq n+1})} \mbox{ and } \mathcal{U}_{\geq n+1}=(\mathcal{U}_{\leq n})^\perp.
\end{align}
Denote by $\tau_{\leq n}\colon \mathcal{T}\rightarrow \mathcal{U}_{\leq n}$ the right adjoint of the inclusion $\mathcal{U}_{\leq n}\hookrightarrow \mathcal{T}$, and by $\tau_{\geq n}\colon \mathcal{T}\rightarrow \mathcal{U}_{\geq n}$ the left adjoint of the inclusion $\mathcal{U}_{\geq n}\hookrightarrow \mathcal{T}$. They are called the \emph{truncation functors} associated to $\mathcal{U}$. For each object $Z\in\mathcal{T}$, there is a canonical  exact triangle
\begin{align}\label{equ:func-tri}
\tau_{\leq n}(Z)\longrightarrow Z\longrightarrow \tau_{\geq n+1}(Z)\longrightarrow \Sigma \tau_{\leq n}(Z).
\end{align}
Indeed, this triangle in the case $n=0$ is isomorphic to the one in (T3) above. The composition $H^0=\tau_{\leq 0}\tau_{\geq 0}\colon \mathcal{T}\rightarrow \mathcal{U}_0$ is the \emph{cohomological functor} associated to $\mathcal{U}$. More generally, we set $H^n(Z)=H^0\Sigma^n(Z)$, which is canonically isomorphic to $\Sigma^n\tau_{\leq n}\tau_{\geq n}(Z)$.  For details, we refer to \cite[Section~1.3]{BBD}.

A $t$-structure $\mathcal{U}$ is said to be \emph{non-degenerate} provided that
\begin{align}\label{equ:non-deg}
\bigcap_{n\in \mathbb{Z}} \mathcal{U}_{\leq n}=\{0\}=\bigcap_{n\in \mathbb{Z}} \mathcal{U}_{\geq n}.
 \end{align}
 The following lemma is standard; see \cite[the proof of Proposition~1.3.7]{BBD}.

\begin{lem}\label{lem:non-de}
Let $\mathcal{U}$ be a non-degenerate $t$-structure and $f\colon X\rightarrow Y$ be a morphism with each $H^n(f)$ an isomorphism. Then $f$ is an isomorphism.
\end{lem}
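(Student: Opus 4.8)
The plan is to show that $f\colon X\to Y$ is an isomorphism by first reducing to the statement that its cone $C$ is zero, and then using non-degeneracy together with the long exact cohomology sequence to force $C$ into $\bigcap_n \mathcal{U}_{\leq n}\cap\bigcap_n\mathcal{U}_{\geq n}$. Concretely, I would complete $f$ to an exact triangle $X\xrightarrow{f} Y\to C\to \Sigma X$. Since $H^0$ (hence each $H^n=H^0\Sigma^n$) is a cohomological functor, this triangle yields a long exact sequence in the heart $\mathcal{U}_0$:
\begin{align*}
\cdots \to H^n(X)\xrightarrow{H^n(f)} H^n(Y)\to H^n(C)\to H^{n+1}(X)\xrightarrow{H^{n+1}(f)} H^{n+1}(Y)\to\cdots
\end{align*}
By hypothesis every $H^n(f)$ is an isomorphism, so exactness immediately gives $H^n(C)=0$ for all $n\in\mathbb{Z}$.

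The next step is to deduce $C=0$ from the vanishing of all its cohomology. Here is where non-degeneracy enters. First, $C\in\bigcap_{m}\mathcal{U}_{\leq m}$: indeed, if $C$ were not in $\mathcal{U}_{\leq m}$ for some $m$, then $\tau_{\geq m+1}(C)\neq 0$; taking the largest such index behavior, or rather arguing by induction, one shows that the ``top'' nonzero truncation detects a nonzero cohomology object. More cleanly, I would argue: suppose $C\notin \mathcal{U}_{\leq m}$ for some $m$; by non-degeneracy $C\notin \bigcap_k\mathcal{U}_{\geq k}$ need not hold directly, so instead I combine both halves. The standard argument (as in \cite[proof of Proposition~1.3.7]{BBD}) runs as follows: from $H^n(C)=0$ and the canonical triangle $\tau_{\leq n}C\to C\to \tau_{\geq n+1}C\to$, an inductive comparison shows $\tau_{\leq n}C\cong \tau_{\leq n-1}C$ for every $n$, and dually $\tau_{\geq n}C\cong \tau_{\geq n+1}C$; iterating and using that $\tau_{\leq n}C\in\mathcal{U}_{\leq n}$ while the composite maps stabilize, one concludes $C\cong \tau_{\leq n}C$ lies in $\mathcal{U}_{\leq n}$ for all $n$, so $C\in\bigcap_n\mathcal{U}_{\leq n}=\{0\}$. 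Thus $C=0$, and the triangle $X\xrightarrow{f}Y\to 0\to\Sigma X$ shows $f$ is an isomorphism.

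The main obstacle is the passage from ``$H^n(C)=0$ for all $n$'' to ``$C\in\bigcap_n\mathcal{U}_{\leq n}$'': this is not a one-line formal consequence and requires the careful inductive manipulation of the truncation triangles together with the octahedral axiom to identify successive truncations of $C$. I would structure that step as: (i) show $H^n(C)=0$ implies the canonical map $\tau_{\leq n-1}C\to\tau_{\leq n}C$ is an isomorphism (compare the two functorial triangles and use that $\Sigma^{-n}H^n(C)=\tau_{\leq n}\tau_{\geq n}C=0$); (ii) conclude by induction that $\tau_{\leq n}C\cong \tau_{\leq m}C$ for all $m\leq n$, and dually for $\tau_{\geq\bullet}$; (iii) use the triangle $\tau_{\leq n}C\to C\to\tau_{\geq n+1}C\to$ and non-degeneracy of the $\mathcal{U}_{\geq\bullet}$ side to kill $\tau_{\geq n+1}C$ in the limit, giving $C\cong \tau_{\leq n}C\in\mathcal{U}_{\leq n}$ for every $n$. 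Everything else — forming the cone, invoking the long exact sequence, reading off that $f$ is invertible once $C=0$ — is routine.
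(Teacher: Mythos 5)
Your proposal follows exactly the route the paper takes: pass to the cone $C$ of $f$, use the long exact cohomology sequence to get $H^n(C)=0$ for all $n$, and then invoke non-degeneracy (via the standard truncation induction from \cite[proof of Proposition~1.3.7]{BBD}) to conclude $C\simeq 0$. The paper simply compresses the last step into one sentence, whereas you spell out the inductive identification of successive truncations; the content is the same and your elaboration is correct.
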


\begin{proof}
Denote by $Z$ the cone of $f$. Then the assumptions imply that $H^n(Z)\simeq 0$ for each $n$. The non-degeneration conditions imply that $Z\simeq 0$, and thus $f$ is an isomorphism.
\end{proof}

A $t$-structure $\mathcal{U}$ is \emph{bounded} provided that
$$\bigcup_{n\in \mathbb{Z}} \mathcal{U}_{\leq n}=\mathcal{T}=\bigcup_{n \in \mathbb{Z}} \mathcal{U}_{\geq n}.$$
Then $\mathcal{U}$ is necessarily non-degenerate. We observe that any object $Z\in \bigcap_{n\in \mathbb{Z}} \mathcal{U}_{\leq n}$ lies in $\mathcal{U}_{\geq m}$ for some $m$. In view of (T2), we have $\mathcal{U}_{\geq m} \cap\mathcal{U}_{\leq m-1}=\{0\}$. This forces that $Z\simeq 0$, that is, $\bigcap_{n\in \mathbb{Z}} \mathcal{U}_{\leq n}=\{0\}$. Similarly, one proves $\bigcap_{n\in \mathbb{Z}} \mathcal{U}_{\geq n}=\{0\}$.

In general, for any $t$-structure $\mathcal{U}$ and $m\leq n$, we set $\mathcal{U}_{[m, n]}=\mathcal{U}_{\geq m}\cap \mathcal{U}_{\leq n}$. By the triangles (\ref{equ:func-tri}), we observe that
$$\mathcal{U}_{[m, n]}=\Sigma^{-m}\mathcal{U}_0\ast \Sigma^{-(m+1)}\mathcal{U}_0\ast \cdots \ast \Sigma^{-n}\mathcal{U}_0.$$
We set
$$\mathcal{U}_\mathbb{N}=\bigcup_{n\geq 0} \mathcal{U}_{[-n, n]}.$$
It coincides with the smallest triangulated full subcategory containing the heart $\mathcal{U}_0$. Moreover, the $t$-structure $\mathcal{U}$ restricts to a bounded $t$-structure $$\mathcal{U}^b=(\mathcal{U}_\mathbb{N}\cap\mathcal{U}_{\leq 0}, \mathcal{U}_\mathbb{N}\cap\mathcal{U}_{\geq 0})$$
on $\mathcal{U}_\mathbb{N}$. We observe that the given $t$-structure $\mathcal{U}$ on $\mathcal{T}$ is bounded if and only if $\mathcal{T}=\mathcal{U}_\mathbb{N}$.

The following example is standard.

\begin{exm}\label{exm:standard}
Let $\mathcal{A}$ be an abelian category and $\mathbf{D}(\mathcal{A})$ its unbounded derived category. Set $\mathbf{D}(\mathcal{A})_{\leq 0}=\{X\in \mathbf{D}(\mathcal{A})\; |\; H^i(X)=0 \mbox{ for any } i>0\}$ and $\mathbf{D}(\mathcal{A})_{\geq 0}=\{X\in \mathbf{D}(\mathcal{A})\; |\; H^i(X)=0 \mbox{ for any } i<0\}$. Here, $H^i(X)$ denotes the $i$-th cohomology of $X$. Then $\mathcal{D}=(\mathbf{D}(\mathcal{A})_{\leq 0}, \mathbf{D}(\mathcal{A})_{\geq 0})$ is a non-degenerate $t$-structure on $\mathbf{D}(\mathcal{A})$, called the \emph{standard} $t$-structure. The heart $\mathcal{D}_0$ is naturally equivalent to $\mathcal{A}$, where we identify an object in $\mathcal{A}$ with the corresponding stalk complex concentrated in degree zero. Moreover, we have
$$\mathcal{D}_\mathbb{N}=\{X\in \mathbf{D}(\mathcal{A}) \; |\; H^i(X)\neq 0\mbox{ for only finitely many } i\},$$
which is equivalent to the bounded derived category $\mathbf{D}^b(\mathcal{A})$.

For a complex $X=(X^n, d^n_X)_{n\in\mathbb{Z}}$ in $\mathcal{D}(\mathcal{A})$, the truncations associated to $\mathcal{D}$ are given by
$$\tau_{\leq n}(X)= \cdots\longrightarrow X^{n-2}\longrightarrow X^{n-1} \longrightarrow {\rm Ker}d_X^{n} \longrightarrow 0$$
and
$$\tau_{\geq n}(X)= 0\longrightarrow {\rm Cok}d_X^{n-1} \longrightarrow X^{n+1}\longrightarrow X^{n+2}\longrightarrow \cdots$$
\end{exm}

Let $\mathcal{U}$ be a $t$-structure on $\mathcal{T}$. For two objects $X$ and $Y$, we have the following two canonical maps:
\begin{align}\label{equ:hom-inj1}
{\rm Hom}_\mathcal{T}(X, Y) \longrightarrow \varprojlim_{n\to +\infty} {\rm Hom}_\mathcal{T}(\tau_{\leq n}(X), Y)
\end{align}
and
\begin{align}\label{equ:hom-inj2}
{\rm Hom}_\mathcal{T}(X, Y) \longrightarrow \varprojlim_{n\to +\infty} {\rm Hom}_\mathcal{T}(X, \tau_{\geq -n}(Y)).
\end{align}

The following definition is inspired by \cite[Subsection~6.1]{Kel05}. The terminology is somehow justified by Proposition~\ref{prop:equality} below.

 \begin{defn}\label{defn:adm}
The $t$-structure $\mathcal{U}$ on $\mathcal{T}$ is called \emph{regular}, provided that (\ref{equ:hom-inj1}) is injective for any $Y\in \bigcup_{m\in \mathbb{Z}} \mathcal{U}_{\leq m}$, and that (\ref{equ:hom-inj2}) is injective for any $X\in \bigcup_{m\in \mathbb{Z}} \mathcal{U}_{\geq m}$.
\end{defn}

It is clear that a bounded $t$-structure is regular, in which case the maps (\ref{equ:hom-inj1}) and (\ref{equ:hom-inj2}) are both isomorphisms.

For any full additive subcategory $\mathcal{X}$ of $\mathcal{T}$, we denote by $[\mathcal{X}]$ the ideal formed by morphisms factoring though $\mathcal{X}$. In other words, we have $$[\mathcal{X}](A, B)=\{f\colon A\rightarrow B\; |\;  f=u\circ v \mbox{ for some }v\colon A\rightarrow X, u\colon X\rightarrow B, \mbox{ and } X\in \mathcal{X} \}.$$

The following characterization indicates some similarity between regular $t$-structures and non-degenerate ones; see (\ref{equ:non-deg}). However, it seems that there is no implication relationship between the two concepts; compare Proposition~\ref{prop:stable} and Remark~\ref{rem:stable} below.

\begin{lem}\label{lem:regular}
Let $\mathcal{U}$ be a $t$-structure. Then $\mathcal{U}$ is regular if and only if
$$\bigcap_{n\in \mathbb{Z}} [\mathcal{U}_{\geq n}](-, Y)=0=\bigcap_{n\in \mathbb{Z}} [\mathcal{U}_{\leq n}](X, -)$$
for any  $Y\in \bigcup_{m\in \mathbb{Z}} \mathcal{U}_{\leq m}$ and $X\in \bigcup_{m\in \mathbb{Z}} \mathcal{U}_{\geq m}$.
\end{lem}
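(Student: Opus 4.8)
The plan is to translate the injectivity of the canonical map \eqref{equ:hom-inj1} directly into a statement about its kernel, identify that kernel with $\bigcap_{n} [\mathcal{U}_{\geq n}](-, Y)$, and then do the dual argument for \eqref{equ:hom-inj2}. First I would fix $Y \in \mathcal{U}_{\leq m}$ for some $m$ and analyze when a morphism $f\colon X \to Y$ maps to zero in the inverse limit $\varprojlim_n {\rm Hom}_\mathcal{T}(\tau_{\leq n}(X), Y)$. Using the canonical triangle \eqref{equ:func-tri}, namely $\tau_{\leq n}(X) \to X \xrightarrow{c_n} \tau_{\geq n+1}(X) \to \Sigma\tau_{\leq n}(X)$, the image of $f$ under the $n$-th component map of \eqref{equ:hom-inj1} is the composite $f \circ (\tau_{\leq n}(X) \to X)$, and this vanishes if and only if $f$ factors through $c_n$, i.e. $f \in [\mathcal{U}_{\geq n+1}](X, Y)$. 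Since the transition maps in the inverse system are induced by $\tau_{\leq n}(X) \to \tau_{\leq n+1}(X)$, an element $f$ maps to $0$ in the limit precisely when each component is $0$ (the limit of a system in which we only need to detect the zero element componentwise), i.e. when $f$ lies in $\bigcap_{n} [\mathcal{U}_{\geq n+1}](X, Y) = \bigcap_{n \in \mathbb{Z}} [\mathcal{U}_{\geq n}](X, Y)$. Hence \eqref{equ:hom-inj1} is injective for all such $Y$ if and only if $\bigcap_{n\in\mathbb{Z}} [\mathcal{U}_{\geq n}](X, Y) = 0$ for all $X$ and all $Y \in \bigcup_m \mathcal{U}_{\leq m}$, which is the first displayed equality (with the roles of the free variables as stated).

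The second equality is entirely dual: fixing $X \in \mathcal{U}_{\geq m}$, a morphism $f\colon X \to Y$ maps to zero in the $n$-th term ${\rm Hom}_\mathcal{T}(X, \tau_{\geq -n}(Y))$ of \eqref{equ:hom-inj2} iff the composite $X \xrightarrow{f} Y \to \tau_{\geq -n}(Y)$ vanishes, iff $f$ factors through $\tau_{\leq -n-1}(Y) \in \mathcal{U}_{\leq -n-1}$, iff $f \in [\mathcal{U}_{\leq -n-1}](X, Y)$; and $f$ maps to $0$ in the inverse limit iff this holds for all $n$, i.e. $f \in \bigcap_{n\in\mathbb{Z}} [\mathcal{U}_{\leq n}](X, Y)$. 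Putting the two halves together gives the claimed characterization. The only mild subtlety, and the point I would write out carefully, is the passage "zero in the inverse limit iff zero in each component": this is immediate because the zero element of an inverse limit of abelian groups is the one with all components zero, so $f$ (a compatible family, being in the image of the canonical map) is zero in the limit exactly when every component $f_n$ is zero — no Mittag–Leffler or $\varprojlim^1$ issue arises since we are only computing a kernel, not surjectivity.

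The main obstacle is essentially bookkeeping rather than mathematics: correctly matching the indexing shifts (the $n$ versus $n+1$ discrepancy coming from the triangle \eqref{equ:func-tri} and from $\tau_{\geq -n}$ versus $\mathcal{U}_{\geq -n}$) so that the intersections over $n \in \mathbb{Z}$ in the two formulations align, and double-checking that "$f$ factors through $c_n$" is correctly identified with membership in the ideal $[\mathcal{U}_{\geq n+1}]$ via the defining property of the truncation triangle together with the orthogonality \eqref{equ:ortho} that guarantees $\tau_{\geq n+1}(X) \in \mathcal{U}_{\geq n+1}$ and that any map to an object of $\mathcal{U}_{\geq n+1}$ factoring relevantly kills $\tau_{\leq n}$. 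I expect the whole argument to be short once these identifications are set up.
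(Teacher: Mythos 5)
Your proposal is correct and follows essentially the same route as the paper: both identify the kernel of the canonical map (\ref{equ:hom-inj1}) with $\bigcap_{n}[\mathcal{U}_{\geq n}](X,Y)$ by applying ${\rm Hom}_\mathcal{T}(-,Y)$ to the truncation triangle (\ref{equ:func-tri}) and using the orthogonality ${\rm Hom}_\mathcal{T}(\tau_{\leq n}(X),\mathcal{U}_{\geq n+1})=0$, then dualize for (\ref{equ:hom-inj2}). The indexing shifts and the ``zero in the limit iff zero componentwise'' point you flag are handled exactly as in the paper's argument, so no gap remains.
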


\begin{proof}
Denote by $\phi$ the map in (\ref{equ:hom-inj1}). We claim that the kernel of $\phi$ equals $\bigcap_{n\in \mathbb{Z}} [\mathcal{U}_{\geq n}](X, Y)$.

We consider the canonical exact triangle  (\ref{equ:func-tri}):
$$\tau_{\leq n}(X)\stackrel{\iota_n}\longrightarrow X \longrightarrow \tau_{\geq n+1}(X)\longrightarrow \Sigma \tau_{\leq n}(X).$$
Take any $f\in \bigcap_{n\in \mathbb{Z}} [\mathcal{U}_{\geq n}](X, Y)$ and fix $n\in \mathbb{Z}$. There is a factorization $X\stackrel{v}\rightarrow U\stackrel{u}\rightarrow Y$ of $f$ for some $U\in \mathcal{U}_{\geq n+1}$. Since ${\rm Hom}_\mathcal{T}(\tau_{\leq n}(X), U)=0$, we have $v\circ \iota_n=0$. It implies that $f\circ \iota_n=0$. Letting $n$ vary, we infer  $\phi(f)=0$.

Conversely, take any map  $g\colon X\rightarrow Y$ satisfying $\phi(g)=0$. Therefore, $g\circ \iota_n=0$ for each $n$. By the exact triangle above, we infer that $g$ factors through $\tau_{\geq n+1}(X)$. This shows that $g\in [\mathcal{U}_{\geq n+1}](X, Y)$. Letting $n$ vary, we infer that $g$ lies in $\bigcap_{n\in \mathbb{Z}} [\mathcal{U}_{\geq n}](X, Y)$. The proof of the claim is completed.

By the claim above, the injectivity of (\ref{equ:hom-inj1}) is equivalent to $\bigcap_{n\in \mathbb{Z}} [\mathcal{U}_{\geq n}](-, Y)=0$ for any $Y\in \bigcup_{m\in \mathbb{Z}} \mathcal{U}_{\leq m}$. A similar result holds for (\ref{equ:hom-inj2}), yielding the required result.
\end{proof}

Following \cite[Definition~9.14]{Miy}, a $t$-structure $\mathcal{U}$ is \emph{stable}, if $\Sigma \mathcal{U}_{\leq 0}=\mathcal{U}_{\leq 0}$, or equivalently, both $\mathcal{U}_{\leq 0}$ and $\mathcal{U}_{\geq 0}$ are triangulated subcategories of $\mathcal{T}$. In this situation, $\mathcal{U}_{\leq 0}$ is a right admissible subcategory, and $\mathcal{U}_{\geq 0}$ is a left admissible subcategory  of $\mathcal{T}$; see \cite[\S 1]{BK}.

In what follows, we will see  that stable $t$-structures should be viewed as degenerate  ones.  We begin with a well-known fact.

\begin{lem}
A $t$-structure $\mathcal{U}$ is stable if and only if its heart $\mathcal{U}_0=\{0\}$.
\end{lem}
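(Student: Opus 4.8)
The plan is to work with the characterization of stability as the single condition $\Sigma\mathcal{U}_{\leq 0}=\mathcal{U}_{\leq 0}$. Since (T1) already gives $\Sigma\mathcal{U}_{\leq 0}\subseteq\mathcal{U}_{\leq 0}$, stability is equivalent to the reverse inclusion $\mathcal{U}_{\leq 0}\subseteq\Sigma\mathcal{U}_{\leq 0}=\mathcal{U}_{\leq -1}$, i.e.\ to the equality $\mathcal{U}_{\leq 0}=\mathcal{U}_{\leq -1}$. So both implications will be phrased as statements comparing $\mathcal{U}_{\leq 0}$ and $\mathcal{U}_{\leq -1}$ (and, dually, $\mathcal{U}_{\geq 0}$ and $\mathcal{U}_{\geq 1}$).

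For the ``only if'' direction, assume $\Sigma\mathcal{U}_{\leq 0}=\mathcal{U}_{\leq 0}$; applying $\Sigma^{-1}$ also yields $\Sigma^{-1}\mathcal{U}_{\leq 0}=\mathcal{U}_{\leq 0}$. Take $X\in\mathcal{U}_0=\mathcal{U}_{\leq 0}\cap\mathcal{U}_{\geq 0}$. Then $\Sigma^{-1}X\in\Sigma^{-1}\mathcal{U}_{\leq 0}=\mathcal{U}_{\leq 0}$, while $\Sigma^{-1}X\in\Sigma^{-1}\mathcal{U}_{\geq 0}$. By (T2), $\mathrm{Hom}_\mathcal{T}(\mathcal{U}_{\leq 0},\Sigma^{-1}\mathcal{U}_{\geq 0})=0$, so $\mathrm{Hom}_\mathcal{T}(\Sigma^{-1}X,\Sigma^{-1}X)=0$; hence $\mathrm{id}_{\Sigma^{-1}X}=0$, forcing $\Sigma^{-1}X\simeq 0$ and therefore $X\simeq 0$. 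Thus $\mathcal{U}_0=\{0\}$.

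For the ``if'' direction, assume $\mathcal{U}_0=\{0\}$ and take any $X\in\mathcal{U}_{\leq 0}$. Consider the canonical truncation triangle from (\ref{equ:func-tri}),
$$\tau_{\leq -1}(X)\longrightarrow X\longrightarrow \tau_{\geq 0}(X)\longrightarrow \Sigma\tau_{\leq -1}(X),$$
with $\tau_{\geq 0}(X)\in\mathcal{U}_{\geq 0}$. Here $\tau_{\leq -1}(X)\in\mathcal{U}_{\leq -1}\subseteq\mathcal{U}_{\leq 0}$ and $X\in\mathcal{U}_{\leq 0}$; since $\mathcal{U}_{\leq 0}={}^\perp(\mathcal{U}_{\geq 1})$ by (\ref{equ:ortho}), it is closed under extensions and, by (T1), under $\Sigma$, hence closed under cones. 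Therefore $\tau_{\geq 0}(X)\in\mathcal{U}_{\leq 0}\cap\mathcal{U}_{\geq 0}=\mathcal{U}_0=\{0\}$, so $\tau_{\leq -1}(X)\to X$ is an isomorphism and $X\in\mathcal{U}_{\leq -1}$. This gives $\mathcal{U}_{\leq 0}=\mathcal{U}_{\leq -1}=\Sigma\mathcal{U}_{\leq 0}$, i.e.\ $\mathcal{U}$ is stable.

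I expect no serious obstacle here; the argument is essentially bookkeeping. The one point requiring a little care is the closure of $\mathcal{U}_{\leq 0}$ under cones, which is deduced from its description as a left orthogonal together with (T1); a symmetric remark handles $\mathcal{U}_{\geq 0}$ if one prefers to run the ``if'' direction on the other side.
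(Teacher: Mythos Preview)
Your proof is correct and follows essentially the same route as the paper: both directions hinge on the identity $\mathcal{U}_{\leq 0}=\mathcal{U}_{\leq -1}$, with the ``only if'' part reducing to $\mathcal{U}_{\leq -1}\cap\mathcal{U}_{\geq 0}=\{0\}$ via (T2), and the ``if'' part using the truncation triangle to see that $\tau_{\geq 0}(X)\in\mathcal{U}_0$ for $X\in\mathcal{U}_{\leq 0}$. Your write-up is in fact a bit more explicit than the paper's, since you spell out why $\mathcal{U}_{\leq 0}$ is closed under cones (via (\ref{equ:ortho}) and (T1)), a point the paper leaves implicit.
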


\begin{proof}
For the ``only if" part, we observe that $\mathcal{U}_{\leq -1}=\mathcal{U}_{\leq 0}$. Then  $\mathcal{U}_0=\mathcal{U}_{\leq -1}\cap \mathcal{U}_{\geq 0}=\{0\}$, where the later equality follows from (T2); see also (\ref{equ:ortho}).

For the ``if" part, we will prove that $\mathcal{U}_{\leq -1}=\mathcal{U}_{\leq 0}$. It suffices to prove that each object $X$ in $\mathcal{U}_{\leq 0}$ lies in $\mathcal{U}_{\leq -1}$. Consider the canonical exact triangle
 $$\tau_{\leq -1}(X)\longrightarrow X\longrightarrow \tau_{\geq 0}(X)\longrightarrow \Sigma \tau_{\leq -1}(X).$$
We observe that $\tau_{\geq 0}(X)$ lies in $\mathcal{U}_0$ and is zero. It follows that $\tau_{\leq -1}(X)\simeq X$, which implies that $X$ lies in $\mathcal{U}_{\leq -1}$.
\end{proof}

For two full subcategories $\mathcal{X}$ and $\mathcal{Y}$ of $\mathcal{T}$,  we write $$\mathcal{T}=\mathcal{X}\times \mathcal{Y},$$
if ${\rm Hom}_\mathcal{T}(\mathcal{X}, \mathcal{Y})=0={\rm Hom}_\mathcal{T}(\mathcal{Y}, \mathcal{X})$, and any object $Z$ in $\mathcal{T}$ admits a decomposition $Z\simeq X\oplus Y$ with some $X\in \mathcal{X}$ and $Y\in \mathcal{Y}$.

\begin{prop}\label{prop:stable}
Let $\mathcal{U}=(\mathcal{U}_{\leq 0}, \mathcal{U}_{\geq 0})$ be a  stable $t$-structure on $\mathcal{T}$. Then $\mathcal{U}$ is regular if and only if $\mathcal{T}=\mathcal{U}_{\leq 0}\times \mathcal{U}_{\geq 0}$.
\end{prop}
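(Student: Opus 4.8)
The plan is to funnel both implications through the single vanishing condition ${\rm Hom}_\mathcal{T}(\mathcal{U}_{\geq 0},\mathcal{U}_{\leq 0})=0$, invoking Lemma~\ref{lem:regular} on the ``regular'' side and the axioms (T2), (T3) on the ``$\times$'' side.

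First I would unwind stability. Since $\Sigma\mathcal{U}_{\leq 0}=\mathcal{U}_{\leq 0}$, also $\Sigma^{-1}\mathcal{U}_{\leq 0}=\mathcal{U}_{\leq 0}$, hence $\mathcal{U}_{\leq n}=\mathcal{U}_{\leq 0}$ for all $n$; then (\ref{equ:ortho}) gives $\mathcal{U}_{\geq 1}=(\mathcal{U}_{\leq 0})^\perp=(\mathcal{U}_{\leq -1})^\perp=\mathcal{U}_{\geq 0}$, so $\mathcal{U}_{\geq n}=\mathcal{U}_{\geq 0}$ for all $n$, and in particular $\Sigma^{-1}\mathcal{U}_{\geq 0}=\mathcal{U}_{\geq 0}$. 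Consequently $\bigcup_{m}\mathcal{U}_{\leq m}=\mathcal{U}_{\leq 0}$ and $\bigcup_{m}\mathcal{U}_{\geq m}=\mathcal{U}_{\geq 0}$, and each intersection over $n$ in Lemma~\ref{lem:regular} collapses to one term. So Lemma~\ref{lem:regular} will say: $\mathcal{U}$ is regular if and only if $[\mathcal{U}_{\geq 0}](-,Y)=0$ for every $Y\in\mathcal{U}_{\leq 0}$ and $[\mathcal{U}_{\leq 0}](X,-)=0$ for every $X\in\mathcal{U}_{\geq 0}$.

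Next I would show that each of these two ideal conditions is equivalent to ${\rm Hom}_\mathcal{T}(\mathcal{U}_{\geq 0},\mathcal{U}_{\leq 0})=0$. If ${\rm Hom}_\mathcal{T}(\mathcal{U}_{\geq 0},\mathcal{U}_{\leq 0})=0$ and $f\in[\mathcal{U}_{\geq 0}](W,Y)$ with $Y\in\mathcal{U}_{\leq 0}$, write $f=u\circ v$ with $v\colon W\to U$, $u\colon U\to Y$, $U\in\mathcal{U}_{\geq 0}$; then $u=0$, so $f=0$. Conversely, $\mathrm{id}_U$ exhibits any $g\in{\rm Hom}_\mathcal{T}(U,Y)$ ($U\in\mathcal{U}_{\geq 0}$, $Y\in\mathcal{U}_{\leq 0}$) as an element of $[\mathcal{U}_{\geq 0}](U,Y)$, so that ideal condition forces $g=0$. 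The second condition is symmetric: factor $f\in[\mathcal{U}_{\leq 0}](X,W)$, $X\in\mathcal{U}_{\geq 0}$, as $u\circ v$ with $v\colon X\to U'$, $U'\in\mathcal{U}_{\leq 0}$, and use $v=0$; the converse again uses an identity morphism. Hence $\mathcal{U}$ is regular if and only if ${\rm Hom}_\mathcal{T}(\mathcal{U}_{\geq 0},\mathcal{U}_{\leq 0})=0$.

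Finally I would identify ${\rm Hom}_\mathcal{T}(\mathcal{U}_{\geq 0},\mathcal{U}_{\leq 0})=0$ with $\mathcal{T}=\mathcal{U}_{\leq 0}\times\mathcal{U}_{\geq 0}$. The implication ``$\Leftarrow$'' is immediate from the definition of $\times$. For ``$\Rightarrow$'', note first that ${\rm Hom}_\mathcal{T}(\mathcal{U}_{\leq 0},\mathcal{U}_{\geq 0})={\rm Hom}_\mathcal{T}(\mathcal{U}_{\leq 0},\Sigma^{-1}\mathcal{U}_{\geq 0})=0$ holds by (T2) together with the first step; and for the direct-sum decomposition, apply (T3) to $Z\in\mathcal{T}$ to get an exact triangle $X\to Z\to\Sigma^{-1}(Y)\to\Sigma X$ with $X\in\mathcal{U}_{\leq 0}$, $Y\in\mathcal{U}_{\geq 0}$; here $\Sigma^{-1}(Y)\in\mathcal{U}_{\geq 0}$ and $\Sigma X\in\mathcal{U}_{\leq 0}$, so the connecting morphism lies in ${\rm Hom}_\mathcal{T}(\mathcal{U}_{\geq 0},\mathcal{U}_{\leq 0})=0$ and the triangle splits, $Z\simeq X\oplus\Sigma^{-1}(Y)$. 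Chaining the equivalences from the last two steps proves the proposition. I expect no serious obstacle; the only point to watch is bookkeeping the $\Sigma^{\pm1}$ so that (T2), (T3) and (\ref{equ:ortho}) are applied to the correct subcategories, and observing that in the stable case the unions $\bigcup_m\mathcal{U}_{\leq m}$ and $\bigcup_m\mathcal{U}_{\geq m}$ degenerate, which is exactly what makes Lemma~\ref{lem:regular} directly applicable.
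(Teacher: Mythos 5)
Your proof is correct. It takes essentially the same route as the paper: both arguments hinge on the observation that stability collapses the towers ($\mathcal{U}_{\leq n}=\mathcal{U}_{\leq 0}$ and $\mathcal{U}_{\geq n}=\mathcal{U}_{\geq 0}$ for all $n$), reduce regularity to the single vanishing condition ${\rm Hom}_\mathcal{T}(\mathcal{U}_{\geq 0},\mathcal{U}_{\leq 0})=0$, and then split the (T3) triangle to obtain the decomposition. The only difference is one of packaging: the paper works directly with the truncation functors (computing $\tau_{\leq n}(X)=X_{\leq 0}$ in the ``if'' direction and $\tau_{\leq n}(X)=0$ for $X\in\mathcal{U}_{\geq 0}$ in the ``only if'' direction, then appealing to the injectivity of the canonical maps into the inverse limits), whereas you route both implications through the ideal-theoretic characterization of Lemma~\ref{lem:regular}, whose intersections and unions degenerate to single terms in the stable case. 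Your version has the small advantage of stating a clean intermediate equivalence (regular $\Leftrightarrow$ ${\rm Hom}_\mathcal{T}(\mathcal{U}_{\geq 0},\mathcal{U}_{\leq 0})=0$) without ever computing a truncation; the paper's version has the advantage of showing, along the way, that the maps (\ref{equ:hom-inj1}) and (\ref{equ:hom-inj2}) are actually isomorphisms, not merely injective, when $\mathcal{T}=\mathcal{U}_{\leq 0}\times\mathcal{U}_{\geq 0}$. All the individual steps you outline (the identity-morphism trick for the converse inclusions of ideals, the vanishing of the connecting morphism) are sound.
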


\begin{proof}
For the ``if" part, we claim that the map (\ref{equ:hom-inj1}) is an isomorphism. We observe that $Y$ lies in $\mathcal{U}_{\leq 0}$. If $X=X_{\leq 0}\oplus X_{\geq 0}$ with $X_{\leq 0}\in \mathcal{U}_{\leq 0}$ and $X_{\geq 0}\in \mathcal{U}_{\geq 0}$. We observe that for each $n$, $\tau_{\leq n}(X)=X_{\leq 0}$ and the natural map ${\rm Hom}_\mathcal{T}(X, Y)\rightarrow {\rm Hom}_\mathcal{T}(\tau_{\leq n}(X), Y)$ is an isomorphism. Then we infer the claim. Similarly, the map (\ref{equ:hom-inj2}) is also an isomorphism.

For the ``only if" part, we observe ${\rm Hom}_\mathcal{T}(\mathcal{U}_{\leq 0}, \mathcal{U}_{\geq 0})=0$, as $\mathcal{U}$ is stable. We claim ${\rm Hom}_\mathcal{T}(\mathcal{U}_{\geq 0}, \mathcal{U}_{\leq 0})=0$. Then the exact triangle  in (T3) will split, and the object $Z$ is isomorphic to $X\oplus \Sigma^{-1}(Y)$. This implies $\mathcal{T}=\mathcal{U}_{\leq 0}\times \mathcal{U}_{\geq 0}$.

For the claim above, take $X\in \mathcal{U}_{\geq 0}$ and $Y\in \mathcal{U}_{\leq 0}$. For each $n$, we have $\tau_{\leq n}(X)\simeq \tau_{\leq -1}(X)=0$. The injectivity of (\ref{equ:hom-inj1}) implies that ${\rm Hom}_\mathcal{T}(X, Y)=0$, as required. This completes the whole proof.
\end{proof}

\begin{rem}\label{rem:stable}
Assume that a $t$-structure  $\mathcal{U}=(\mathcal{U}_{\leq 0}, \mathcal{U}_{\geq 0})$ is both stable and non-degenerate. Then the category $\mathcal{T}$ has to be the zero category. Indeed, for any object $X$, consider the zero map $f\colon 0\rightarrow X$. As the heart $\mathcal{U}_0$ is zero, each $H^n(f)$ is trivially an isomorphism. By Lemma~\ref{lem:non-de}, $f$ is an isomorphism, as required.
\end{rem}

\section{Hereditary t-structures and global dimensions}

In this section, we study hereditary $t$-structures and the global dimension of a $t$-structure in general.

Let $\mathcal{T}$ be a triangulated category. Following \cite[Subsection~6.1]{Kel05}, a $t$-structure $\mathcal{U}=(\mathcal{U}_{\leq 0}, \mathcal{U}_{\geq 0})$ on $\mathcal{T}$ is said to be \emph{hereditary}, provided that ${\rm Hom}_\mathcal{T}(\mathcal{U}_{\geq 0}, \mathcal{U}_{\leq -2})=0$. In this case, the restricted $t$-structure $\mathcal{U}^b$ on  $\mathcal{U}_\mathbb{N}$ is necessarily hereditary. Moreover, the common heart $\mathcal{U}_0$ is a hereditary abelian category and each object $Z\in \mathcal{U}_\mathbb{N}$ is non-canonically isomorphic to $\bigoplus_{n\in \mathbb{Z}} \Sigma^{-p}H^p(Z)$; for a proof, we refer to \cite[Lemma~2.1]{CRin}. We mention that hereditary $t$-structures are said to be split in \cite[Section~2]{Hub}.

\begin{lem}
A $t$-structure $\mathcal{U}$ is hereditary if and only if each object $Z$  is isomorphic to $X\oplus Y$ for some $X\in \mathcal{U}_{\leq 0} $ and $Y\in \mathcal{U}_{\geq 1}$.
\end{lem}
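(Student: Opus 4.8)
The plan is to prove both implications directly from the defining condition of hereditariness, namely $\mathrm{Hom}_\mathcal{T}(\mathcal{U}_{\geq 0}, \mathcal{U}_{\leq -2})=0$, which after applying $\Sigma$ is the same as $\mathrm{Hom}_\mathcal{T}(\mathcal{U}_{\geq 1}, \mathcal{U}_{\leq -1})=0$.

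For the ``only if'' part, I would take an arbitrary object $Z$ and form the canonical exact triangle associated to the truncation at $0$, namely $\tau_{\leq 0}(Z) \to Z \to \tau_{\geq 1}(Z) \to \Sigma\tau_{\leq 0}(Z)$, so that $X:=\tau_{\leq 0}(Z)\in\mathcal{U}_{\leq 0}$ and $Y:=\tau_{\geq 1}(Z)\in\mathcal{U}_{\geq 1}$. To split this triangle it suffices to kill the connecting morphism $Y \to \Sigma X$, i.e.\ to show $\mathrm{Hom}_\mathcal{T}(\tau_{\geq 1}(Z), \Sigma\tau_{\leq 0}(Z))=0$. Since $\Sigma\tau_{\leq 0}(Z)\in\Sigma\mathcal{U}_{\leq 0}=\mathcal{U}_{\leq -1}$ by (T1), this vanishing is exactly an instance of $\mathrm{Hom}_\mathcal{T}(\mathcal{U}_{\geq 1},\mathcal{U}_{\leq -1})=0$; hence the triangle splits and $Z\simeq X\oplus Y$.

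For the ``if'' part, I would verify $\mathrm{Hom}_\mathcal{T}(A, B)=0$ for $A\in\mathcal{U}_{\geq 0}$ and $B\in\mathcal{U}_{\leq -2}$. Given such an $A$, write $A\simeq X\oplus Y$ with $X\in\mathcal{U}_{\leq 0}$, $Y\in\mathcal{U}_{\geq 1}$; then $X$ lies in $\mathcal{U}_{\leq 0}\cap\mathcal{U}_{\geq 0}=\mathcal{U}_0$ and $Y\in\mathcal{U}_{\geq 1}$. We have $\mathrm{Hom}_\mathcal{T}(X, B)=0$ because $B\in\mathcal{U}_{\leq -2}\subseteq\mathcal{U}_{\leq -1}$ and (T2) (in the form (\ref{equ:ortho})) gives $\mathrm{Hom}_\mathcal{T}(\mathcal{U}_{\geq 0},\mathcal{U}_{\leq -1})=0$; similarly $\mathrm{Hom}_\mathcal{T}(Y, B)=0$ since $Y\in\mathcal{U}_{\geq 1}$ and $B\in\mathcal{U}_{\leq -2}\subseteq\mathcal{U}_{\leq 0}$, again by (\ref{equ:ortho}). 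Therefore $\mathrm{Hom}_\mathcal{T}(A, B)=0$, so $\mathcal{U}$ is hereditary. (One small subtlety worth double-checking: in the ``if'' direction the hypothesis as stated applies to every object, so in particular to objects of $\mathcal{U}_{\geq 0}$, and that is all that is used.)

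I do not expect a serious obstacle here; the only point requiring slight care is matching the index shifts between the stated hereditariness condition $\mathrm{Hom}_\mathcal{T}(\mathcal{U}_{\geq 0},\mathcal{U}_{\leq -2})=0$ and the splitting condition for the truncation triangle at $0$, which reads $\mathrm{Hom}_\mathcal{T}(\mathcal{U}_{\geq 1},\mathcal{U}_{\leq -1})=0$; these are equivalent after applying $\Sigma$. One could alternatively phrase the whole argument with the triangle truncated at $-1$ to make the indices line up verbatim, but the version above is cleaner.
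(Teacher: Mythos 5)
Your ``only if'' direction is correct and is essentially the paper's own argument: hereditariness kills the connecting morphism $\tau_{\geq 1}(Z)\rightarrow \Sigma\tau_{\leq 0}(Z)$, so the truncation triangle splits. The ``if'' direction, however, has a genuine error. You claim ${\rm Hom}_\mathcal{T}(X,B)=0$ for $X\in\mathcal{U}_0$, $B\in\mathcal{U}_{\leq -2}$, and ${\rm Hom}_\mathcal{T}(Y,B)=0$ for $Y\in\mathcal{U}_{\geq 1}$, $B\in\mathcal{U}_{\leq -2}$, ``by (T2)''. This reads the orthogonality backwards: (T2) and (\ref{equ:ortho}) give ${\rm Hom}_\mathcal{T}(\mathcal{U}_{\leq n},\mathcal{U}_{\geq n+1})=0$, i.e.\ the vanishing of morphisms \emph{from} the aisle \emph{to} the shifted coaisle, whereas you need morphisms from $\mathcal{U}_{\geq 0}$ to $\mathcal{U}_{\leq -1}$ (resp.\ from $\mathcal{U}_{\geq 1}$ to $\mathcal{U}_{\leq 0}$) to vanish. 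Those are exactly the ${\rm Ext}^{\geq 1}$-type groups that do \emph{not} vanish for a general $t$-structure: for the standard $t$-structure on $\mathbf{D}(\mathcal{A})$ one has ${\rm Hom}(M,\Sigma^2 N)={\rm Ext}^2_\mathcal{A}(M,N)$ with $M\in\mathcal{D}_0$ and $\Sigma^2 N\in\mathcal{D}_{\leq -2}$, which is precisely what hereditariness is supposed to control. If your two vanishing claims held unconditionally, every $t$-structure would have global dimension $0$ and the lemma would be vacuous.

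The underlying strategic problem is that decomposing the \emph{source} $A$ cannot work: splitting $A$ into a $\mathcal{U}_0$-piece and a $\mathcal{U}_{\geq 1}$-piece only replaces ${\rm Hom}_\mathcal{T}(\mathcal{U}_{\geq 0},\mathcal{U}_{\leq -2})$ by groups of the same or higher cohomological ``width'', and iterating never terminates. The paper instead applies the splitting hypothesis to the \emph{cone} of the morphism $u\colon A\rightarrow B$: applying the hypothesis to $\Sigma^{-2}{\rm Cone}(u)$ gives ${\rm Cone}(u)=C_1\oplus C_2$ with $C_1\in\mathcal{U}_{\leq -2}$ and $C_2\in\mathcal{U}_{\geq -1}$; the component $B\rightarrow C_2$ then vanishes by (T2) used in the correct direction, and together with ${\rm Hom}_\mathcal{T}(C_1,\Sigma A)=0={\rm Hom}_\mathcal{T}(B,A)$ a splitting lemma (\cite[Lemma~2.4(3)]{CRin}) forces $u=0$. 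Some argument of this shape, which exploits the decomposition of an object built from the morphism $u$ rather than from its source alone, is required; as written, your ``if'' direction does not go through.
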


\begin{proof}
The ``only if" part follows from the triangle in (T3), as the rightmost morphism is zero. For the ``if" part, we take a morphism $u\colon A\rightarrow B$ with $A\in \mathcal{U}_{\geq 0}$ and $B\in \mathcal{U}_{\leq -2}$. Using the translation functor $\Sigma^2$, we infer that the cone of $u$ has a decomposition ${\rm Cone}(u)=C_1\oplus C_2$ with $C_1\in \mathcal{U}_{\leq -2}$ and $C_2\in \mathcal{U}_{\geq -1}$. So, we have an exact triangle
$$A \stackrel{u}\longrightarrow B\stackrel{\binom{v}{0}}\longrightarrow C_1\oplus C_2\longrightarrow \Sigma(A).$$
Here, we use the fact ${\rm Hom}_\mathcal{T}(B, C_2)=0$; see (T2). Moreover, we observe that ${\rm Hom}_\mathcal{T}(C_1, \Sigma(A))=0$ and ${\rm Hom}_\mathcal{T}(B, A)=0$. Then \cite[Lemma~2.4(3)]{CRin} implies that $u=0$.
\end{proof}

The following result, extending the well-known one in \cite[Subsection~1.6]{Kra}, is expected in \cite[Proposition~1(b)]{Kel05}.

\begin{prop}\label{prop:hereditary}
Let $\mathcal{U}$ be a hereditary $t$-structure on $\mathcal{T}$. Assume further that $\mathcal{U}$ is non-degenerate satisfying the following condition: for any  family $\{A_n\;|\; n\in \mathbb{Z}\}$ of objects in $\mathcal{U}_0$, the coproduct $\coprod_{n\in \mathbb{Z}} \Sigma^{-n} A_n$ and product $\prod_{n\in \mathbb{Z}} \Sigma^{-n} A_n$ exist in $\mathcal{T}$, and the canonical morphism
$$\coprod_{n\in \mathbb{Z}} \Sigma^{-n} A_n\longrightarrow \prod_{n\in \mathbb{Z}} \Sigma^{-n} A_n$$
is an isomorphism. Then each object $Z$ in $\mathcal{T}$ is isomorphic to $\coprod_{n\in \mathbb{Z}} \Sigma^{-n} H^n(Z)$.
\end{prop}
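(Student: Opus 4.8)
The plan is to show that the canonical map $\coprod_{n} \Sigma^{-n}H^n(Z) \to Z$ is an isomorphism by verifying that it induces an isomorphism on all cohomology functors $H^m$ and then invoking Lemma~\ref{lem:non-de}, since $\mathcal{U}$ is non-degenerate. So the first task is to construct this canonical morphism. For each $n$, there is a canonical morphism $\Sigma^{-n}H^n(Z) = \Sigma^{-n}\tau_{\leq n}\tau_{\geq n}(Z) \to \Sigma^{-n}\tau_{\leq n}(Z) \to \ldots$; more carefully, using the hereditariness one expects a canonical morphism $\Sigma^{-n}H^n(Z)\to Z$ analogous to the splitting in the displayed decomposition $Z\cong\bigoplus\Sigma^{-p}H^p(Z)$ valid on $\mathcal{U}_\mathbb{N}$. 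Assembling these over all $n$ and using the existence of the coproduct gives $\varphi\colon\coprod_n\Sigma^{-n}H^n(Z)\to Z$. I would first pin down this construction cleanly, perhaps factoring through the truncations $\tau_{\geq -N}\tau_{\leq N}(Z)$, which lie in $\mathcal{U}_\mathbb{N}$ and hence split as $\bigoplus_{|n|\leq N}\Sigma^{-n}H^n(Z)$, and taking a suitable (co)limit or direct construction.

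The key computation is then to evaluate $H^m(\varphi)$. The point is that $H^m$ should commute with the coproduct $\coprod_n\Sigma^{-n}H^n(Z)$ in the relevant sense: because the coproduct equals the product by hypothesis, and because $H^m(\Sigma^{-n}A_n)=0$ unless $m=n$ (as $A_n\in\mathcal{U}_0$ forces $\Sigma^{-n}A_n\in\mathcal{U}_{[n,n]}$), one hopes that $H^m\bigl(\coprod_n\Sigma^{-n}H^n(Z)\bigr)\cong H^m(Z)$ with the isomorphism compatible with $\varphi$. Here the coproduct-equals-product hypothesis is doing real work: $H^m$ (being $\tau_{\leq m}\tau_{\geq m}$, a composite of an adjoint on each side) commutes with products up to the truncation issues, and commutes with coproducts only under extra hypotheses, so having both available lets one squeeze out the computation. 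The cleanest route is probably: $\tau_{\geq m}$ is a left adjoint hence commutes with coproducts, $\tau_{\leq m}$ is a right adjoint hence commutes with products, and the hypothesis identifies the two, so $H^m$ applied to $\coprod_n\Sigma^{-n}A_n$ collapses to the single summand $H^m(\Sigma^{-m}A_m)=A_m$. Applying this with $A_n=H^n(Z)$ gives $H^m(\coprod_n\Sigma^{-n}H^n(Z))\cong H^m(Z)$, and one checks this identification is exactly $H^m(\varphi)$.

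I expect the main obstacle to be the bookkeeping around truncation functors and (co)products: specifically, showing that $H^m$ really does commute appropriately with the biproduct $\coprod_n\Sigma^{-n}H^n(Z)$, and that the resulting isomorphism $H^m(\coprod_n\Sigma^{-n}H^n(Z))\xrightarrow{\sim}H^m(Z)$ coincides with $H^m(\varphi)$ rather than merely being an abstract isomorphism. One subtlety is that $\tau_{\leq m}$ commuting with products and $\tau_{\geq m}$ with coproducts are each fine, but interchanging them requires the canonical morphism $\coprod\to\prod$ to be invertible, which is precisely the hypothesis; I would make sure to apply it at the right stage, likely after one truncation has already killed all but finitely many ``tails'' on one side, or by a direct diagram chase on the canonical triangle~\eqref{equ:func-tri} for $\coprod_n\Sigma^{-n}H^n(Z)$. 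Once $H^m(\varphi)$ is an isomorphism for every $m$, Lemma~\ref{lem:non-de} finishes the argument immediately. A secondary point to handle with care is the construction of $\varphi$ itself: the decomposition of objects of $\mathcal{U}_\mathbb{N}$ cited from \cite[Lemma~2.1]{CRin} is non-canonical, so I would instead build $\varphi$ from the genuinely canonical data — the counit $\tau_{\leq n}(Z)\to Z$ composed with a canonical map $\Sigma^{-n}H^n(Z)\to\tau_{\leq n}(Z)$ coming from hereditariness (the relevant connecting maps vanishing) — to ensure compatibility with the cohomology computation.
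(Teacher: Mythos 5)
Your proposal is correct and follows essentially the same route as the paper: build $s_p\colon \Sigma^{-p}H^p(Z)\to Z$ by splitting the triangle $\tau_{\leq p-1}(Z)\to\tau_{\leq p}(Z)\to\Sigma^{-p}H^p(Z)$ (the connecting map vanishes by hereditariness), assemble these into $s\colon\coprod_p\Sigma^{-p}H^p(Z)\to Z$, and use the coproduct-equals-product hypothesis to write the source as $L\oplus\Sigma^{-p}H^p(Z)\oplus R$ with $L\in\mathcal{U}_{\leq p-1}$ (closed under coproducts) and $R\in\mathcal{U}_{\geq p+1}$ (closed under products), so that $H^p(s)$ is an isomorphism and Lemma~\ref{lem:non-de} applies. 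The only cosmetic difference is that the section $\iota_p$ need not be canonical — any choice works — so your worry on that point is harmless.
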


\begin{proof}
We first claim that the canonical map $\Sigma^{-p}A_p\hookrightarrow \coprod_{n\in \mathbb{Z}} \Sigma^{-n} A_n$ induces an isomorphism on their $p$-th cohomology $H^p$. For this,  we write
$$\coprod_{n\in \mathbb{Z}} \Sigma^{-n} A_n=\coprod_{n<p} \Sigma^{-n} A_n\oplus \Sigma^{-p}A_p\oplus \prod_{n>p} \Sigma^{-n} A_n:=L\oplus \Sigma^{-p}A_p \oplus R.$$
In view of (\ref{equ:ortho}), we infer that $\mathcal{U}_{\leq p-1}$ is closed under coproducts, and that $\mathcal{U}_{\geq p+1}$ is closed under products. Therefore, $L$ lies in $\mathcal{U}_{\leq p-1}$ and $R$ lies in $\mathcal{U}_{\geq p+1}$. We conclude that $H^p(L)\simeq 0\simeq H^p(R)$. This implies the claim.

For any object $Z$ and any integer $p$, we consider the canonical exact triangle
$$\tau_{\leq p-1}(Z)\longrightarrow \tau_{\leq p}(Z)\stackrel{a_p} \longrightarrow \Sigma^{-p}H^p(Z)\longrightarrow \Sigma \tau_{\leq p-1}(Z).$$
As the rightmost morphism vanishes, the middle morphism $a_p$ admits a section, say $\iota_p\colon \Sigma^{-p}H^p(Z)\rightarrow \tau_{\leq p}(Z)$. Consider the following composite map
$$s_p\colon \Sigma^{-p}H^p(Z) \stackrel{\iota_p}\longrightarrow \tau_{\leq p}(Z)\longrightarrow Z,$$
where the unnamed arrow is the canonical morphism $\tau_{\leq p}(Z)\rightarrow Z$ in (\ref{equ:func-tri}). We observe that each $H^p(s_p)$ is an isomorphism.

These morphisms $s_p$ uniquely induce a morphism
$$s\colon \coprod_{p\in \mathbb{Z}} \Sigma^{-p} H^p(Z)\longrightarrow Z.$$
By the claim above, we infer that each $H^p(s)$ is an isomorphism. By Lemma~\ref{lem:non-de}, we infer that $s$ is an isomorphism, as required.
\end{proof}

The above consideration motivates the following concept; compare \cite[Subsection~3.1]{Q} and \cite[Subsection~1.1]{Lor}.

\begin{defn}
Let $\mathcal{U}$ be a $t$-structure on $\mathcal{T}$. The \emph{global dimension} of $\mathcal{U}$, denoted by ${\rm gl.dim}(\mathcal{U})$, is the smallest natural number $d$ such that
$${\rm Hom}_\mathcal{T}(\mathcal{U}_{\geq 0}, \mathcal{U}_{\leq -(d+1)})=0.$$
If such $d$ does not exist, we set ${\rm gl.dim}(\mathcal{U})=+\infty$.
\end{defn}

Therefore, $\mathcal{U}$ is hereditary if and only if ${\rm gl.dim}(\mathcal{U})\leq 1$. We observe
$${\rm gl.dim}(\mathcal{U}^b)\leq {\rm gl.dim}(\mathcal{U}).$$
In the following degenerate example, one might have a strict inequality.

\begin{exm}\label{exm:stable}
Let $\mathcal{U}$ be a stable $t$-structure on $\mathcal{T}$. Then ${\rm gl.dim}(\mathcal{U})<+\infty$ if and only if ${\rm gl.dim}(\mathcal{U})=0$, if and only if $\mathcal{T}=\mathcal{U}_{\leq 0}\times \mathcal{U}_{\geq 0}$. The latter condition is equivalent to the one that $\mathcal{U}$ is regular; see Proposition~\ref{prop:stable}. Therefore, for a stable non-regular $t$-structure $\mathcal{U}$, we have ${\rm gl.dim}(\mathcal{U})=+\infty$ but ${\rm gl.dim}(\mathcal{U}^b)=0$.
\end{exm}

We  explore the situations where the equality occurs.

\begin{prop}\label{prop:equality}
Let $\mathcal{U}$ be a $t$-structure on $\mathcal{T}$. Consider the following conditions.
\begin{enumerate}
\item ${\rm gl.dim}(\mathcal{U})<+\infty$.
\item The $t$-structure $\mathcal{U}$ is regular.
\item ${\rm gl.dim}(\mathcal{U}^b)={\rm gl.dim}(\mathcal{U})$.
\end{enumerate}
Then we have implications ``(1)$\Rightarrow$ (2) $\Rightarrow$ (3)".
\end{prop}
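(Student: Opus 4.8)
The plan is to prove the two implications separately, the first by a direct factorization argument and the second by a cohomology-splitting argument.

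For ``(1) $\Rightarrow$ (2)'', suppose $\mathrm{gl.dim}(\mathcal{U})=d<+\infty$. I want to show both maps (\ref{equ:hom-inj1}) and (\ref{equ:hom-inj2}) are injective on the relevant subcategories. By Lemma~\ref{lem:regular}, this amounts to showing $\bigcap_{n\in\mathbb{Z}}[\mathcal{U}_{\geq n}](X,Y)=0$ whenever $Y\in\mathcal{U}_{\leq m}$ for some $m$, and dually. So take $f\colon X\to Y$ lying in $[\mathcal{U}_{\geq n}](X,Y)$ for every $n$, with $Y\in\mathcal{U}_{\leq m}$. Pick $n$ large enough that $n\geq m+d+1$, so that $\mathcal{U}_{\geq n}\subseteq\mathcal{U}_{\geq m+d+1}$. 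Then $f$ factors as $X\xrightarrow{v}U\xrightarrow{u}Y$ with $U\in\mathcal{U}_{\geq m+d+1}$, and since $\mathrm{gl.dim}(\mathcal{U})=d$ gives $\mathrm{Hom}_\mathcal{T}(\mathcal{U}_{\geq m+d+1},\mathcal{U}_{\leq m})=\mathrm{Hom}_\mathcal{T}(\mathcal{U}_{\geq 0},\mathcal{U}_{\leq -(d+1)})=0$ after an appropriate shift by $\Sigma^{m}$, we get $u=0$, hence $f=0$. The dual statement for (\ref{equ:hom-inj2}) is symmetric, using that $X$ lies in some $\mathcal{U}_{\geq m'}$ and that $\mathcal{U}_{\leq -n}\subseteq\mathcal{U}_{\leq m'-d-1}$ for $n$ large. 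This gives regularity.

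For ``(2) $\Rightarrow$ (3)'', since always $\mathrm{gl.dim}(\mathcal{U}^b)\leq\mathrm{gl.dim}(\mathcal{U})$, I only need the reverse inequality, and I may assume $e:=\mathrm{gl.dim}(\mathcal{U}^b)<+\infty$ (otherwise there is nothing to prove). The goal is then $\mathrm{Hom}_\mathcal{T}(\mathcal{U}_{\geq 0},\mathcal{U}_{\leq -(e+1)})=0$. Take $X\in\mathcal{U}_{\geq 0}$, $Y\in\mathcal{U}_{\leq -(e+1)}$, and $f\colon X\to Y$. The idea is to compare $f$ against its truncations: since $Y\in\bigcup_m\mathcal{U}_{\leq m}$, regularity tells us $f=0$ as soon as $f\circ\iota_n=0$ for all $n$, where $\iota_n\colon\tau_{\leq n}(X)\to X$. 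Now $\tau_{\leq n}(X)\in\mathcal{U}_{[0,n]}=\mathcal{U}_0\ast\Sigma^{-1}\mathcal{U}_0\ast\cdots\ast\Sigma^{-n}\mathcal{U}_0\subseteq\mathcal{U}_\mathbb{N}$, and $\tau_{\geq -(e+1)}$ applied on the $Y$-side lands in $\mathcal{U}_\mathbb{N}$ as well; more precisely, factor $f\circ\iota_n$ through the truncation $\tau_{[-(e+1)-n,\,-(e+1)]}(Y)\in\mathcal{U}_\mathbb{N}$ (the relevant part of $Y$ that $\tau_{\leq n}(X)$ can map to, using that $\mathrm{Hom}_\mathcal{T}(\mathcal{U}_{\geq 0},\mathcal{U}_{\leq -n-1})$-type vanishing kills the tails by bounded global dimension). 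On $\mathcal{U}_\mathbb{N}$ the $t$-structure is $\mathcal{U}^b$, which has global dimension $e$, so $\mathrm{Hom}_{\mathcal{U}_\mathbb{N}}$ from a degree-$\geq 0$ object to a degree-$\leq-(e+1)$ object vanishes, forcing $f\circ\iota_n=0$. Letting $n$ vary and invoking regularity gives $f=0$.

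The main obstacle is the bookkeeping in ``(2) $\Rightarrow$ (3)'': one must carefully arrange that $f\circ\iota_n$ genuinely factors through an object of $\mathcal{U}_\mathbb{N}$ on which $\mathcal{U}^b$ controls the Hom-vanishing. The clean way is to also truncate $Y$: write the canonical triangle $\tau_{\geq -(e+1)-n}(Y)\to Y\to\tau_{\leq -(e+2)-n}(Y)\to$, observe $\mathrm{Hom}_\mathcal{T}(\tau_{\leq n}(X),\tau_{\leq -(e+2)-n}(Y))=0$ because $\tau_{\leq n}(X)\in\mathcal{U}_{\leq n}$ and $\tau_{\leq -(e+2)-n}(Y)\in\mathcal{U}_{\geq$ ...$\}^\perp$ — actually by (T2)-type orthogonality since $-(e+2)-n < n$ fails for small $n$ but holds for $n$ large, which is all we need. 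Then $f\circ\iota_n$ factors through $\tau_{\geq -(e+1)-n}(Y)$, and combined with $\tau_{\leq n}(X)\in\mathcal{U}_{\leq n}$ we land inside $\mathcal{U}_{[-(e+1)-n,\,n]}\subseteq\mathcal{U}_\mathbb{N}$; but $\tau_{\leq n}(X)\in\mathcal{U}_{\geq 0}$ and $\tau_{\geq -(e+1)-n}(Y)\in\mathcal{U}_{\leq-(e+1)}$ still, so $\mathrm{gl.dim}(\mathcal{U}^b)=e$ applies directly to kill the composite. Getting the index inequalities and the direction of the orthogonality exactly right is the only delicate point; everything else is the standard truncation calculus from Section~2.
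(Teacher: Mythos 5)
Your proof of ``(1) $\Rightarrow$ (2)'' is correct; routing it through Lemma~\ref{lem:regular} rather than applying ${\rm Hom}_\mathcal{T}(-,Y)$ directly to the truncation triangle is a variant the paper itself acknowledges works, and your index bookkeeping ($\mathcal{U}_{\geq m+d+1}$ against $\mathcal{U}_{\leq m}$) is right.

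The implication ``(2) $\Rightarrow$ (3)'' has a genuine gap. After replacing $X$ by $\tau_{\leq n}(X)$ via the first half of regularity, your argument hinges on the vanishing of ${\rm Hom}_\mathcal{T}(\tau_{\leq n}(X), \tau_{\leq -(e+2)-n}(Y))$, which you justify by ``(T2)-type orthogonality''. This is not a (T2) statement: axiom (T2) kills morphisms from $\mathcal{U}_{\leq p}$ into $\mathcal{U}_{\geq p+1}$ (``negative Ext''), whereas here the source lies in $\mathcal{U}_{\geq 0}$ and the target in $\mathcal{U}_{\leq -(e+2)-n}$, so the group in question is a ``high Ext'' group whose vanishing is precisely a finite-global-dimension statement for the ambient $\mathcal{U}$ --- i.e.\ condition (1), which you may not assume under (2) alone (the paper stresses that (2) does not imply (1)). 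Nor does ${\rm gl.dim}(\mathcal{U}^b)=e$ apply to this group, since $\tau_{\leq -(e+2)-n}(Y)$ need not lie in $\mathcal{U}_\mathbb{N}$. The missing ingredient is the second half of Definition~\ref{defn:adm}: since $\tau_{\leq n}(X)\in\mathcal{U}_{\geq 0}$, the map (\ref{equ:hom-inj2}) with source $\tau_{\leq n}(X)$ is injective, so you may further replace $Y$ by its truncations $\tau_{\geq -m}(Y)\in\mathcal{U}^b_{\leq -(e+1)}$; only then does ${\rm gl.dim}(\mathcal{U}^b)=e$ kill every group ${\rm Hom}_\mathcal{T}(\tau_{\leq n}(X),\tau_{\geq -m}(Y))$ in the resulting double inverse limit, which is exactly how the paper concludes. (Your vanishing claim is in fact true under (2), but only via this second use of regularity, not via (T2).) A minor further slip: you write the truncation triangle for $Y$ with $\tau_{\geq}$ on the left and $\tau_{\leq}$ on the right, whereas the canonical triangle (\ref{equ:func-tri}) goes the other way.
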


As any bounded $t$-structure is regular, by Proposition~\ref{prop:gl.dim}(1) below we infer that  (2) does not imply (1) in general. For a non-bounded example, we refer to Example~\ref{exm:regular}.

\begin{proof}
The implication ``(1) $\Rightarrow$ (2)" actually follows from Lemma~\ref{lem:regular}. We give a direct proof. Assume that ${\rm gl.dim}(\mathcal{U})=d$. We only prove the injectivity of (\ref{equ:hom-inj1}). Assume that $Y\in \mathcal{U}_{\leq m}$. We apply ${\rm Hom}_\mathcal{T}(-, Y)$ to the canonical exact triangle
\begin{align*}
\tau_{\leq m+d}(X)\stackrel{\iota}\longrightarrow X\longrightarrow \tau_{\geq m+d+1}(X)\longrightarrow \Sigma \tau_{\leq m+d}(X),
\end{align*}
and observe that ${\rm Hom}_\mathcal{T}(\tau_{\geq m+d+1}(X), Y)=0$ by ${\rm gl.dim}(\mathcal{U})=d$. Then we infer that the natural map
$${\rm Hom}_\mathcal{T}(X, Y)\longrightarrow {\rm Hom}_\mathcal{T}(\tau_{\leq m+d}(X), Y),$$
sending $f$ to $\iota\circ f$, is injective.  As this map factors through the one in (\ref{equ:hom-inj1}), we deduce that the latter is injective, as required.

To show ``(2) $\Rightarrow$ (3)", we may assume that ${\rm gl.dim}(\mathcal{U}^b)=l$ is finite. We take $X\in \mathcal{U}_{\geq 0}$ and $Y\in \mathcal{U}_{\leq -(l+1)}$. By the injectivity assumptions, we have an injective map
$${\rm Hom}_\mathcal{T}(X, Y)\rightarrow \varprojlim_{n\to +\infty} {\rm Hom}_\mathcal{T}(\tau_{\leq n}(X), Y)\rightarrow \varprojlim_{n\to +\infty} \varprojlim_{m\to +\infty} {\rm Hom}_\mathcal{T}(\tau_{\leq n}(X), \tau_{\geq -m}(Y)).$$
We observe that $\tau_{\leq n}(X)\in \mathcal{U}^b_{\geq 0}$ and $\tau_{\geq -m}(Y)\in \mathcal{U}^b_{\leq -(l+1)}$. Then we deduce $ {\rm Hom}_\mathcal{T}(\tau_{\leq n}(X), \tau_{\geq -m}(Y))=0$ by ${\rm gl.dim}(\mathcal{U}^b)=l$. We conclude  ${\rm Hom}_\mathcal{T}(X, Y)=0$, as required.
\end{proof}

For an abelian category $\mathcal{A}$, its global dimension ${\rm gl.dim}(\mathcal{A})$ is defined to be the supremum of $\{d\geq 0\; |\; {\rm Ext}_\mathcal{A}^d(-, -)\neq 0\}$.

\begin{prop}\label{prop:gl.dim}
Let $\mathcal{A}$ be an abelian category. Denote by $\mathcal{D}$ the standard t-structure on $\mathcal{D}(\mathcal{A})$. Then the following statements hold.
\begin{enumerate}
\item ${\rm gl.dim}(\mathcal{A})={\rm gl.dim}(\mathcal{D}^b)\leq {\rm gl.dim}(\mathcal{D})$.
\item Assume that $\mathcal{A}$ has enough projective objects or enough injective objects. Then we have
$${\rm gl.dim}(\mathcal{A})={\rm gl.dim}(\mathcal{D}^b)= {\rm gl.dim}(\mathcal{D}).$$
\end{enumerate}
\end{prop}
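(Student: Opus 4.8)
The plan is to prove Proposition~\ref{prop:gl.dim} by combining the general inequalities already established with a careful analysis of the standard $t$-structure on $\mathbf{D}(\mathcal{A})$. For part~(1), the equality ${\rm gl.dim}(\mathcal{A})={\rm gl.dim}(\mathcal{D}^b)$ follows from the standard identification ${\rm Hom}_{\mathbf{D}(\mathcal{A})}(X, \Sigma^d Y)\cong {\rm Ext}^d_\mathcal{A}(X, Y)$ for $X, Y\in \mathcal{A}$ viewed as stalk complexes in degree zero, together with the description of $\mathcal{D}^b_{\geq 0}$ and $\mathcal{D}^b_{\leq -(d+1)}$ via cohomology. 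Concretely, I would first observe that ${\rm Hom}_{\mathbf{D}(\mathcal{A})}(\mathcal{D}^b_{\geq 0}, \mathcal{D}^b_{\leq -(d+1)})=0$ is equivalent, by the standard truncation triangles and a dévissage over cohomology degrees, to the vanishing of ${\rm Ext}^k_\mathcal{A}(-,-)$ for all $k\geq d+1$; hence the smallest such $d$ is exactly ${\rm gl.dim}(\mathcal{A})$. The inequality ${\rm gl.dim}(\mathcal{D}^b)\leq {\rm gl.dim}(\mathcal{D})$ is a special case of the general inequality ${\rm gl.dim}(\mathcal{U}^b)\leq {\rm gl.dim}(\mathcal{U})$ noted earlier in the excerpt, applied to $\mathcal{U}=\mathcal{D}$.

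For part~(2), the only thing left is to show ${\rm gl.dim}(\mathcal{D})\leq {\rm gl.dim}(\mathcal{A})$ under the hypothesis that $\mathcal{A}$ has enough projectives or enough injectives. By Proposition~\ref{prop:equality}, it suffices to show that $\mathcal{D}$ is regular (which gives (2)$\Rightarrow$(3) there, i.e.\ ${\rm gl.dim}(\mathcal{D})={\rm gl.dim}(\mathcal{D}^b)$); alternatively one can argue directly. I would take the direct route: suppose ${\rm gl.dim}(\mathcal{A})=n<+\infty$ (if it is infinite there is nothing to prove), and show ${\rm Hom}_{\mathbf{D}(\mathcal{A})}(X, Y)=0$ whenever $X\in \mathcal{D}_{\geq 0}$ and $Y\in \mathcal{D}_{\leq -(n+1)}$. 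Assuming enough injectives, replace $X$ by a complex and $Y$ by a complex of injectives; since $Y$ has cohomology concentrated in degrees $\leq -(n+1)$ and consists of injectives, and since $\mathcal{A}$ has finite global dimension $n$, one can truncate $Y$ to a bounded-above complex of injectives supported in a window of length at most $n+1$ — more precisely, the hypercohomology spectral sequence ${\rm Ext}^p_\mathcal{A}(H^{-q}(X), H^j(Y))\Rightarrow {\rm Hom}_{\mathbf{D}(\mathcal{A})}(X, \Sigma^{p+q+j}Y)$ (or simply computing $\operatorname{Hom}$ in the homotopy category $\mathbf{K}(\mathcal{A})$ after an injective resolution) forces every contributing term to vanish because $p\leq n$ forces $j\leq -(n+1)$ hence $p+q+j<0$ while $q\leq 0$. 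The case of enough projectives is dual, using projective resolutions and $\mathbf{D}(\mathcal{A})\cong \mathbf{K}(\mathcal{A})$-type computations on the source side.

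The main obstacle I anticipate is the subtlety in the ``enough injectives'' reduction for \emph{unbounded} complexes: a general object $Y\in \mathbf{D}(\mathcal{A})$ need not admit a bounded-below injective resolution, and if $\mathcal{A}$ does not have exact products the naive totalization of a Cartan--Eilenberg resolution may fail to compute $\mathbf{R}\mathrm{Hom}$. I would handle this by exploiting that $Y\in \mathcal{D}_{\leq -(n+1)}$, so up to quasi-isomorphism $Y$ is a complex concentrated in degrees $\leq -(n+1)$; since $\mathcal{A}$ has finite global dimension, such a complex is quasi-isomorphic to a bounded-above complex of injectives $I^\bullet$ with $I^j=0$ for $j>-(n+1)$ — here finiteness of global dimension is exactly what makes the injective dimension of each term bounded, but one still needs to assemble these into a single resolution, which is done by the usual ``truncated injective resolution'' argument (resolve, then apply the sharp truncation $\tau_{\leq -(n+1)}$ to the resolution and note the truncated complex still consists of injectives in degrees $<-(n+1)$ together with a single syzygy term that is injective since $\mathrm{inj.dim}\leq n$). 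Dually for projectives on the $X$-side. Once $Y$ (resp.\ $X$) is presented this way, the vanishing of $\operatorname{Hom}$ is a direct degree count, as indicated above, and the proof concludes.
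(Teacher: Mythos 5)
Your part (1) and the overall plan for part (2) are essentially the paper's: the equality ${\rm gl.dim}(\mathcal{A})={\rm gl.dim}(\mathcal{D}^b)$ comes from ${\rm Ext}^d_\mathcal{A}(M,N)\cong {\rm Hom}_{\mathbf{D}(\mathcal{A})}(M,\Sigma^d N)$ plus d\'evissage, and for (2) the paper likewise truncates $X$ and $Y$, replaces them by complexes of projectives concentrated in \emph{disjoint} degree windows (citing \cite[Propositions~3.4 and 3.5]{Chen11} for the existence of such resolutions and for the comparison ${\rm Hom}_{\mathbf{K}(\mathcal{A})}\cong{\rm Hom}_{\mathbf{D}(\mathcal{A})}$), and concludes by the trivial degree count.

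However, one step of your ``enough injectives'' argument fails as written. You claim that $Y\in\mathcal{D}_{\leq -(n+1)}$ is quasi-isomorphic to a complex of injectives $I^\bullet$ with $I^j=0$ for $j>-(n+1)$, obtained by resolving and then truncating, ``the syzygy term being injective since ${\rm inj.dim}\leq n$''. This is false: injective resolutions extend to the \emph{right}, and for a complex of injectives concentrated in degrees $\leq -(n+1)$ the top cohomology $H^{-(n+1)}$ is a cokernel of a map of injectives. Take $\mathcal{A}=\mathbb{Z}\mbox{-Mod}$ (so $n=1$) and $Y=\Sigma^{2}\mathbb{Z}$: any complex of divisible groups in degrees $\leq -2$ has divisible $H^{-2}$, which cannot be $\mathbb{Z}$; and the truncation $\tau_{\leq -2}$ of the resolution $\mathbb{Q}\to\mathbb{Q}/\mathbb{Z}$ has degree $-2$ term $\ker(\mathbb{Q}\to\mathbb{Q}/\mathbb{Z})=\mathbb{Z}$, which is not injective. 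The repair is the exact dual of the paper's projective argument: since every object has injective dimension $\leq n$, a complex $Y$ concentrated in degrees $\leq -(n+1)$ admits a quasi-isomorphism $Y\to J$ with $J$ a complex of injectives satisfying $J^m=0$ for $m\geq 0$ (the resolution terminates by degree $-(n+1)+n=-1$), while $X$, concentrated in degrees $\geq 0$, admits $X\to I$ with $I$ a bounded-below complex of injectives in degrees $\geq 0$. The two windows are still disjoint, so ${\rm Hom}_{\mathbf{K}(\mathcal{A})}(I,J)=0$. Note also that you cannot simply ``compute ${\rm Hom}$ in $\mathbf{K}(\mathcal{A})$ after an injective resolution'' of $Y$ alone: a bounded-above complex of injectives is not K-injective in general, so you need to resolve both sides and invoke the dual of \cite[Proposition~3.5]{Chen11} to identify ${\rm Hom}_{\mathbf{K}(\mathcal{A})}(I,J)$ with ${\rm Hom}_{\mathbf{D}(\mathcal{A})}(I,J)$, exactly as the paper does in the projective case.
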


\begin{proof}
The equality in (1) is well known. We just recall that ${\rm Ext}_\mathcal{A}^n(M, N)={\rm Hom}_{\mathcal{D}(\mathcal{A})}(M, \Sigma^n(N))$ for any objects $M, N$ and $n\geq 0$; moreover, $\Sigma^n(N)$ lies in $\mathcal{D}^b_{\leq -n}$.

For (2), we assume that $\mathcal{A}$ has enough projective objects. In view of (1), we may assume that ${\rm gl.dim}(\mathcal{A})=d$ is finite. It suffices to claim that ${\rm Hom}_{\mathbf{D}(\mathcal{A})}(X, Y)=0$ for any $X\in \mathbf{D}(\mathcal{A})_{\geq 0}$ and $Y\in \mathbf{D}(\mathcal{A})_{\leq -(d+1)}$. By truncations, we may assume that $X^n=0$ for $n<0$ and $Y^m=0$ for $m>-(d+1)$. By \cite[Proposition~3.4]{Chen11}, there is a complex $P$ consisting of projective objects with a quasi-isomorphism $P\rightarrow X$  satisfying $P^n=0$ for $n\leq -(d+1)$. By the same result, there is a complex $Q$ consisting of projective objects with a quasi-isomorphism $Q\rightarrow Y$ satisfying $Q^m=0$ for $m>-(d+1)$. By \cite[Proposition~3.5]{Chen11}, the natural map
$${\rm Hom}_{\mathbf{K}(\mathcal{A})}(P, Q)\longrightarrow {\rm Hom}_{\mathbf{D}(\mathcal{A})}(P, Q)$$
is an isomorphism, where $\mathbf{K}(\mathcal{A})$ denotes the homotopy category of complexes in $\mathcal{A}$.  However, it is clear that ${\rm Hom}_{\mathbf{K}(\mathcal{A})}(P, Q)=0$. This proves the claim.
\end{proof}

\begin{rem}
Proposition~\ref{prop:gl.dim}(2) yields another proof of \cite[Lemma~6]{KK}, as a Grothendieck category always has enough injective objects. We do not know whether the assumptions in Proposition~\ref{prop:gl.dim}(2) are really needed. We mention the following fact due to \cite[Theorem~A.1]{PS}: for a hereditary abelian category $\mathcal{H}$, the standard $t$-structure $\mathcal{D}$ on $\mathbf{D}(\mathcal{H})$ is always hereditary.   In this case, we have ${\rm gl.dim}(\mathcal{H})={\rm gl.dim}(\mathcal{D})\leq 1$.
\end{rem}

\begin{rem}\label{rem:hereditary}
Let $\mathcal{T}$ be a triangulated category with a hereditary $t$-structure $\mathcal{U}$, which satisfies the conditions in Proposition~\ref{prop:hereditary}.  In view of   \cite[Theorem~A.1]{PS}, we expect that an unbounded version of \cite[Corollay~1.2]{Hub} holds, that is,  the triangulated category $\mathcal{T}$ is necessarily triangle equivalent to $\mathbf{D}(\mathcal{U}_0)$.
\end{rem}
Let $R$ be a ring with unit. We denote by $R\mbox{-Mod}$ the abelian category of left $R$-modules.  Then the global dimension of $R\mbox{-Mod}$ coincides with the left global dimension of $R$.

\begin{rem}
In general, for a $t$-structure $\mathcal{U}$ on a triangulated category $\mathcal{T}$,  there is no direct relationship between ${\rm gl.dim}(\mathcal{U}_0)$ and ${\rm gl.dim}(\mathcal{U})$. For example,   the usual  Postnikov  $t$-structure on the stable homotopy category  SH has infinite global dimension, but its heart is equivalent to $\mathbb{Z}\mbox{-Mod}$ and  has global dimension one; see \cite[Section~3]{Hub} or \cite[Subsection~4.6]{Bon}.  We refer to \cite[Example~6.3]{BZ} for an explicit example of a bounded $t$-structure $\mathcal{U}$ satisfying  ${\rm gl.dim}(\mathcal{U})\leq 4$ and ${\rm gl.dim}(\mathcal{U}_0)=+\infty$.
\end{rem}

\begin{exm}
Let $R$ be a ring. By Proposition~\ref{prop:gl.dim}(2),  the standard $t$-structure on the derived category $\mathbf{D}(R\mbox{-}{\rm Mod})$ is hereditary if and only if $R$ is a left hereditary ring. This is the classical situation studied in \cite[Subsection~1.6]{Kra}.

Combining Propositions~\ref{prop:equality} and \ref{prop:gl.dim}(2), we infer that the standard $t$-structure on  $\mathbf{D}(R\mbox{-}{\rm Mod})$ is regular, provided that the left global dimension of $R$ is finite.

Recall a general fact: for two complexes $X, Y$ of $R$-modules, we have a short exact sequence
$$0\longrightarrow {{\varprojlim}^1} {\rm Hom}(\tau_{\leq n}(X), \Sigma^{-1}(Y))\longrightarrow {\rm Hom}(X, Y) \stackrel{\phi}\longrightarrow \varprojlim {\rm Hom}(\tau_{\leq n}(X), Y)\longrightarrow 0.$$
Here, $\varprojlim=\varprojlim_{n\to +\infty}$ and ${{\varprojlim}^1}$ is its first derived functor \cite[Section~3.5]{Wei}; ${\rm Hom}$ is taken in $\mathbf{D}(R\mbox{-}{\rm Mod})$;  the truncated complexes $\tau_{\leq n}(X)$ are described in Example~\ref{exm:standard} and $\phi$ is the canonical map in (\ref{equ:hom-inj1}).  It follows that if $R$ has finite left global dimension, then the map $\phi$ is an isomorphism when the cohomology of $Y$ is bounded-above. A similar fact holds for the map (\ref{equ:hom-inj2}).

For the above short exact sequence, we observe that the colimit of the inclusion sequence
$$\tau_{\leq 0}(X)\stackrel{\iota_0}\longrightarrow \tau_{\leq 1}(X)\stackrel{\iota_1} \longrightarrow \tau_{\leq 2}(X)\longrightarrow \cdots$$
is identified with $X$. There is an exact triangle
$${\coprod}_{n\geq 0}\tau_{\leq n}(X)\stackrel{\Phi} \longrightarrow {\coprod}_{n\geq 0}\tau_{\leq n}(X)\longrightarrow X \longrightarrow \Sigma({\coprod}_{n\geq 0}\tau_{\leq n}(X)),$$
where the components of $\Phi$ are determined by $\binom{{\rm Id}}{-\iota_n}\colon \tau_{\leq n}(X)\rightarrow \tau_{\leq n}(X)\oplus \tau_{\leq n+1}(X)$. Applying the cohomological functor ${\rm Hom}(-, Y)$ to this triangle, we infer the required short exact sequence immediately.
\end{exm}

We give an example of a regular $t$-structure, which is not bounded and has infinite global dimension.

\begin{exm}\label{exm:regular}
For each $a\geq 1$, we let $\mathcal{T}^a$ be a triangulated category with a $t$-structure $\mathcal{U}^a$ such that it is not bounded and ${\rm gl.dim}(\mathcal{U}^a)=a$. For example, we might take a ring $R^a$ with left global dimension $a$,  and set $\mathcal{T}^a=\mathbf{D}(R^a\mbox{-{\rm Mod}})$.

Set $\mathcal{T}=\coprod_{a\geq 1}\mathcal{T}^a$ to be its coproduct. In other words, each object in $\mathcal{T}$ is a formal sum $X=\oplus_{a\geq 1} X^a$ such that $X^a\in \mathcal{T}^a$ satisfying $X^a=0$ for all but finitely many $a$'s. Then we have a $t$-structure $\mathcal{U}=(\coprod_{a\geq 1}\mathcal{U}^a_{\leq 0}, \coprod_{a\geq 1}\mathcal{U}^a_{\geq 0})$ on $\mathcal{T}$, which is not bounded. We observe that ${\rm gl.dim}(\mathcal{U})=+\infty$. Since each $\mathcal{U}^a$ is regular, we infer that $\mathcal{U}$ is regular.
\end{exm}

\section{The distance between t-structures}

In this section, we discuss the distance between t-structures and its relation with the global dimensions. Throughout, we fix a triangulated category $\mathcal{T}$.

\begin{lem}\label{lem:two-t}
Let $\mathcal{U}=(\mathcal{U}_{\leq 0}, \mathcal{U}_{\geq 0})$ and $\mathcal{V}=(\mathcal{V}_{\leq 0}, \mathcal{V}_{\geq 0})$ be two t-structures on $\mathcal{T}$. Assume that $m$ and $n$ are integers satisfying $m\leq n$. Then the following statements are equivalent:
\begin{enumerate}
\item $\mathcal{V}_{\leq m}\subseteq \mathcal{U}_{\leq 0}\subseteq \mathcal{V}_{\leq n}$;
\item $\mathcal{U}_{\leq -n} \subseteq \mathcal{V}_{\leq 0}\subseteq \mathcal{U}_{\leq -m}$;
\item $\mathcal{V}_{\geq n}\subseteq \mathcal{U}_{\geq 0}\subseteq \mathcal{V}_{\geq m}$;
\item $\mathcal{U}_{\geq -m}\subseteq \mathcal{V}_{\geq 0}\subseteq \mathcal{U}_{\geq -n}$;
\item $\mathcal{V}_{\leq m}\subseteq \mathcal{U}_{\leq 0}$ and $\mathcal{V}_{\geq n}\subseteq \mathcal{U}_{\geq 0}$;
\item $\mathcal{U}_{\leq -n}\subseteq \mathcal{V}_{\leq 0}$ and $\mathcal{U}_{\geq -m}\subseteq \mathcal{V}_{\geq 0}$;
\end{enumerate}
Assume that both $\mathcal{U}$ and $\mathcal{V}$ are bounded. Then any of the above statements is equivalent to each of the following ones.
\begin{enumerate}
\item[(i)] $\mathcal{U}_0\subseteq \mathcal{V}_{[m, n]}$;
\item[(i)'] $\mathcal{V}_0\subseteq \mathcal{U}_{[-n, -m]}$.
\end{enumerate}
\end{lem}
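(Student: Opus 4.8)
The plan is to reduce all of (1)--(6) to two ``half-inclusions'' and their reformulations under the translation functor and the orthogonality relations (\ref{equ:ortho}). The key auxiliary claim I would isolate first is: \emph{for any two $t$-structures $\mathcal{P}$ and $\mathcal{Q}$ on $\mathcal{T}$ and any integer $k$, the four inclusions $\mathcal{P}_{\leq k}\subseteq\mathcal{Q}_{\leq 0}$, $\mathcal{Q}_{\geq 0}\subseteq\mathcal{P}_{\geq k}$, $\mathcal{P}_{\leq 0}\subseteq\mathcal{Q}_{\leq -k}$, $\mathcal{Q}_{\geq -k}\subseteq\mathcal{P}_{\geq 0}$ are equivalent.} The first two are equivalent via (\ref{equ:ortho}): $\mathcal{P}_{\leq k}\subseteq\mathcal{Q}_{\leq 0}={}^\perp(\mathcal{Q}_{\geq 1})$ amounts to ${\rm Hom}_\mathcal{T}(\mathcal{P}_{\leq k},\mathcal{Q}_{\geq 1})=0$, hence to $\mathcal{Q}_{\geq 1}\subseteq(\mathcal{P}_{\leq k})^\perp=\mathcal{P}_{\geq k+1}$, and applying $\Sigma$ gives $\mathcal{Q}_{\geq 0}\subseteq\mathcal{P}_{\geq k}$; the remaining two are obtained from these by applying the auto-equivalence $\Sigma^{k}$, using $\Sigma^{k}\mathcal{P}_{\leq k}=\mathcal{P}_{\leq 0}$, $\Sigma^{k}\mathcal{Q}_{\leq 0}=\mathcal{Q}_{\leq -k}$, $\Sigma^{k}\mathcal{Q}_{\geq 0}=\mathcal{Q}_{\geq -k}$, $\Sigma^{k}\mathcal{P}_{\geq k}=\mathcal{P}_{\geq 0}$.

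I would then apply this claim twice. With $(\mathcal{P},\mathcal{Q},k)=(\mathcal{V},\mathcal{U},m)$ it gives the mutually equivalent statements forming a package I call $(A)$: $\mathcal{V}_{\leq m}\subseteq\mathcal{U}_{\leq 0}$, $\mathcal{U}_{\geq 0}\subseteq\mathcal{V}_{\geq m}$, $\mathcal{V}_{\leq 0}\subseteq\mathcal{U}_{\leq -m}$, $\mathcal{U}_{\geq -m}\subseteq\mathcal{V}_{\geq 0}$. With $(\mathcal{P},\mathcal{Q},k)=(\mathcal{U},\mathcal{V},-n)$ it gives a package $(B)$: $\mathcal{U}_{\leq -n}\subseteq\mathcal{V}_{\leq 0}$, $\mathcal{V}_{\geq 0}\subseteq\mathcal{U}_{\geq -n}$, $\mathcal{U}_{\leq 0}\subseteq\mathcal{V}_{\leq n}$, $\mathcal{V}_{\geq n}\subseteq\mathcal{U}_{\geq 0}$. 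Inspecting the list, each of (1)--(6) is exactly the conjunction of one member of $(A)$ with one member of $(B)$: for instance (1) is (first member of $(A)$) and (third member of $(B)$); (2) is (first member of $(B)$) and (third member of $(A)$); (5) is (first member of $(A)$) and (fourth member of $(B)$); and likewise for (3), (4), (6). Hence (1)--(6) are all equivalent, namely to ``$(A)$ and $(B)$''.

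For the bounded case, ``(1)--(6) $\Rightarrow$ (i)'' is immediate from the members $\mathcal{U}_{\geq 0}\subseteq\mathcal{V}_{\geq m}$ and $\mathcal{U}_{\leq 0}\subseteq\mathcal{V}_{\leq n}$, since then $\mathcal{U}_0=\mathcal{U}_{\geq 0}\cap\mathcal{U}_{\leq 0}\subseteq\mathcal{V}_{\geq m}\cap\mathcal{V}_{\leq n}=\mathcal{V}_{[m,n]}$. For ``(i) $\Rightarrow$ (1)--(6)'' I would show (i) forces $\mathcal{U}_{\leq 0}\subseteq\mathcal{V}_{\leq n}$ and $\mathcal{U}_{\geq 0}\subseteq\mathcal{V}_{\geq m}$: given $X\in\mathcal{U}_{\leq 0}$, boundedness of $\mathcal{U}$ puts $X$ in some $\mathcal{U}_{[-k,0]}=\Sigma^{k}\mathcal{U}_0\ast\cdots\ast\mathcal{U}_0$; for $0\leq j\leq k$ one has $\Sigma^{j}\mathcal{U}_0\subseteq\Sigma^{j}\mathcal{V}_{[m,n]}\subseteq\Sigma^{j}\mathcal{V}_{\leq n}=\mathcal{V}_{\leq n-j}\subseteq\mathcal{V}_{\leq n}$, and since $\mathcal{V}_{\leq n}$ is closed under extensions, $X\in\mathcal{V}_{\leq n}$; the other inclusion is symmetric, using $\mathcal{U}_{[0,k]}=\mathcal{U}_0\ast\cdots\ast\Sigma^{-k}\mathcal{U}_0$ and closure of $\mathcal{V}_{\geq m}$ under extensions. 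Finally (i)$'$ follows by applying the equivalence ``(i) $\Leftrightarrow$ (1)--(6)'', just proved, to the bounded pair $(\mathcal{V},\mathcal{U})$ and the integers $-n\leq -m$: its ``(i)'' becomes $\mathcal{V}_0\subseteq\mathcal{U}_{[-n,-m]}$, i.e. (i)$'$, while its ``(1)'' becomes $\mathcal{U}_{\leq -n}\subseteq\mathcal{V}_{\leq 0}\subseteq\mathcal{U}_{\leq -m}$, i.e. statement (2).

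The only genuinely non-formal step is ``(i) $\Rightarrow$ (1)--(6)'', where boundedness of $\mathcal{U}$ and the extension-closedness of $\mathcal{V}_{\leq n}$ and $\mathcal{V}_{\geq m}$ enter through the presentation $\mathcal{U}_{[p,q]}=\Sigma^{-p}\mathcal{U}_0\ast\cdots\ast\Sigma^{-q}\mathcal{U}_0$; everything else is routine but index-sensitive bookkeeping with $\Sigma^{\pm1}$ and (\ref{equ:ortho}).
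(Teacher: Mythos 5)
Your proposal is correct and follows essentially the same route as the paper: the equivalences among (1)--(6) by the translation functor and the orthogonality relations (\ref{equ:ortho}) (which you merely make more explicit via your auxiliary claim), and the bounded case via the presentation $\mathcal{U}_{[a,b]}=\Sigma^{-a}\mathcal{U}_0\ast\cdots\ast\Sigma^{-b}\mathcal{U}_0$ together with extension-closure of the $\mathcal{V}$-aisles, with (i)$'$ obtained by symmetry.
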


\begin{proof}
Applying the translation functor and (\ref{equ:ortho}), we infer these equivalences among (1)-(6).  In the bounded cases, we only prove that (i) is equivalent to any of (1)-(6).

By (1) and (3), we infer (i) from the following identity:
$$\mathcal{U}_0=\mathcal{U}_{\leq 0}\cap \mathcal{U}_{\geq 0}\subseteq \mathcal{V}_{\leq n}\cap \mathcal{V}_{\geq m}=\mathcal{V}_{[m, n]}.$$
Conversely, we assume $\mathcal{U}_0\subseteq \mathcal{V}_{[m, n]}$. For any $a\leq b$, we recall that $$\mathcal{U}_{[a, b]}=\Sigma^{-a}\mathcal{U}_0* \Sigma^{-(a+1)}\mathcal{U}_0 *\cdots *\Sigma^{-b}\mathcal{U}_0.$$ We infer that $\mathcal{U}_{[a, b]}\subseteq \mathcal{V}_{[a+m, b+n]}$. As $\mathcal{U}_{\leq -n}=\bigcup_{a\leq -n}\mathcal{U}_{[a, -n]}$, we have $\mathcal{U}_{\leq -n}\subseteq \mathcal{V}_{\leq 0}$. Similarly, we have $\mathcal{U}_{\geq -m}\subseteq \mathcal{V}_{\geq 0}$.¡¡We infer (6), as required.
\end{proof}

The following notion is known to experts; compare \cite[Subsection~2.4]{QW} and \cite[Subsection~2.2]{MZ}.

\begin{defn}
For two $t$-structures $\mathcal{U}$ and $\mathcal{V}$ on $\mathcal{T}$, we define their \emph{distance} $d(\mathcal{U}, \mathcal{V})$ to be the smallest natural number $d$ such that $\mathcal{V}_{\leq m}\subseteq \mathcal{U}_{\leq 0}\subseteq \mathcal{V}_{\leq m+d}$ for some integer $m$. If such $d$ does not exist, we set $d(\mathcal{U}, \mathcal{V})=+\infty$.
\end{defn}

\begin{rem}
By definition, for any two $t$-structures $\mathcal{U}$ and $\mathcal{V}$ and for any integers $m,n$, we have $d(\mathcal{U}, \mathcal{V})=d(\Sigma^m(\mathcal{U}), \Sigma^n(\mathcal{V}))$.
\end{rem}

Koszul duality \cite{BGS} provides many examples of bounded $t$-structures with infinite distances.

\begin{exm}
Denote by $k[\epsilon]=k\oplus k\epsilon$ the algebra of dual numbers over a field $k$, and by $k[t]$ the polynomial algebra in one variable. They are naturally $\mathbb{Z}$-graded by means of ${\rm deg}(\epsilon)=1={\rm deg}(t)$. The corresponding categories of finitely generated $\mathbb{Z}$-graded modules are denoted by $k[\epsilon]\mbox{-{\rm gr}}$ and $k[t]\mbox{-{\rm gr}}$, respectively. By \cite[Theorem~2.12.1]{BGS}, there is a triangle equivalence
$$F\colon \mathbf{D}^b(k[\epsilon]\mbox{-{\rm gr}})\longrightarrow \mathbf{D}^b(k[t]\mbox{-{\rm gr}})$$
such that $F(\Sigma^n(k(-n)))=k[t](n)$ for any integer $n$. Here, for any graded module $V$, its degree-shift $V(n)$ means the same module with the grading given by $V(n)_p=V_{n+p}$.

Recall that $\mathcal{D}$ denotes the standard $t$-structure on $\mathbf{D}^b(k[\epsilon]\mbox{-{\rm gr}})$. Denote by $\mathcal{D}_F$ the $t$-structure on $\mathbf{D}^b(k[\epsilon]\mbox{-{\rm gr}})$,  which is transported from the standard $t$-structure of $\mathbf{D}^b(k[t]\mbox{-{\rm gr}})$ via $F$.  We observe that the heart of $\mathcal{D}_F$ does not belong to $\mathcal{D}_{[-m, m]}$ for any $m$. This implies that $d(\mathcal{D}, \mathcal{D}_F)=+\infty$.
\end{exm}

\begin{exm}\label{exm:length}
Let $\mathcal{U}$ be a bounded $t$-structure $\mathcal{T}$ such that its heart $\mathcal{U}_0$ is  a length abelian category with only finitely many simple objects up to isomorphism. Such a $t$-structure is called \emph{algebraic} in \cite[Subsection~2.4]{QW}. Then $d(\mathcal{U}, \mathcal{V})<+\infty$ for any bounded $t$-structure $\mathcal{V}$ on $\mathcal{T}$.

Indeed, we take a complete set $\{S_1, S_2, \cdots, S_n\}$ of representatives of simple objects in $\mathcal{U}_0$. Then any object in $\mathcal{U}_0$ is an iterated extension of these simple objects.   There are integers $m\leq n$ such that each $S_i$ lies in $\mathcal{V}_{[m, n]}$. As $\mathcal{V}_{[m, n]}$ is closed under extensions, we infer that $\mathcal{U}_0\subseteq \mathcal{V}_{[m, n]}$. By Lemma~\ref{lem:two-t}, we have $\mathcal{V}_{\leq m}\subseteq \mathcal{U}_{\leq 0}\subseteq \mathcal{V}_{\leq n}$, which implies $d(\mathcal{U}, \mathcal{V})\leq n-m<+\infty$.
\end{exm}

\begin{lem}\label{lem:dist}
Keep the notation as above. Then the following statements hold.
\begin{enumerate}
\item $d(\mathcal{U}, \mathcal{V})=d(\mathcal{V}, \mathcal{U})$.
\item $d(\mathcal{U}, \mathcal{V})=0$ if and only if $\mathcal{V}=\Sigma^m(\mathcal{U})$ for some  integer $m$.
\item For a third t-structure $\mathcal{W}$, we have $d(\mathcal{U}, \mathcal{W})\leq d(\mathcal{U}, \mathcal{V})+d(\mathcal{V}, \mathcal{W})$.
\item ${\rm gl.dim}(\mathcal{U})\leq {\rm gl.dim}(\mathcal{V})+d(\mathcal{U}, \mathcal{V})$ and ${\rm gl.dim}(\mathcal{V})\leq {\rm gl.dim}(\mathcal{U})+d(\mathcal{U}, \mathcal{V})$.
\end{enumerate}
\end{lem}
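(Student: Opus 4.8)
\textbf{Proof plan for Lemma~\ref{lem:dist}.}
The plan is to prove the four parts in order, using the symmetric reformulations collected in Lemma~\ref{lem:two-t}. For part (1), symmetry of the distance, the key observation is that the defining condition $\mathcal{V}_{\leq m}\subseteq \mathcal{U}_{\leq 0}\subseteq \mathcal{V}_{\leq m+d}$ can be rewritten, via the equivalence of (1) and (2) in Lemma~\ref{lem:two-t} applied with the pair $(\mathcal{U},\mathcal{V})$ and suitable shifts, as a condition of the shape $\mathcal{U}_{\leq m'}\subseteq \mathcal{V}_{\leq 0}\subseteq \mathcal{U}_{\leq m'+d}$; indeed statement (2) there reads $\mathcal{U}_{\leq -m-d}\subseteq \mathcal{V}_{\leq 0}\subseteq \mathcal{U}_{\leq -m}$, which is exactly the witness for $d(\mathcal{V},\mathcal{U})\le d$ with $m'=-m-d$. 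Hence $d(\mathcal{U},\mathcal{V})\le d$ iff $d(\mathcal{V},\mathcal{U})\le d$, giving equality.

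For part (2): if $\mathcal{V}=\Sigma^m(\mathcal{U})$ then $\mathcal{V}_{\leq 0}=\mathcal{U}_{\leq m}$, so taking $d=0$ and the integer $-m$ in the definition gives $d(\mathcal{U},\mathcal{V})=0$. Conversely, $d(\mathcal{U},\mathcal{V})=0$ means $\mathcal{V}_{\leq m}=\mathcal{U}_{\leq 0}$ for some $m$, hence $\mathcal{V}_{\leq 0}=\Sigma^{m}\mathcal{V}_{\leq m}=\Sigma^{m}\mathcal{U}_{\leq 0}$; since a $t$-structure is determined by its aisle $\mathcal{U}_{\leq 0}$ (the co-aisle being recovered by orthogonality, cf.\ (\ref{equ:ortho})), we get $\mathcal{V}=\Sigma^{m}(\mathcal{U})$. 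For part (3), suppose $d(\mathcal{U},\mathcal{V})=d_1$ and $d(\mathcal{V},\mathcal{W})=d_2$ are finite, with witnesses $\mathcal{V}_{\leq m}\subseteq \mathcal{U}_{\leq 0}\subseteq \mathcal{V}_{\leq m+d_1}$ and $\mathcal{W}_{\leq p}\subseteq \mathcal{V}_{\leq 0}\subseteq \mathcal{W}_{\leq p+d_2}$. Shifting the second inclusion by $\Sigma^{-m}$ (so replacing $0$ by $m$, $p$ by $p-m$) yields $\mathcal{W}_{\leq p-m}\subseteq \mathcal{V}_{\leq m}$ and $\mathcal{V}_{\leq m+d_1}\subseteq \mathcal{W}_{\leq p-m+d_1+d_2}$; chaining with the first inclusion gives $\mathcal{W}_{\leq p-m}\subseteq \mathcal{U}_{\leq 0}\subseteq \mathcal{W}_{\leq (p-m)+d_1+d_2}$, so $d(\mathcal{U},\mathcal{W})\le d_1+d_2$ (and the inequality is trivial if either distance is infinite).

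For part (4), assume $d(\mathcal{U},\mathcal{V})=d$ is finite and ${\rm gl.dim}(\mathcal{V})=l$ is finite (otherwise the inequality is vacuous), with witness $\mathcal{V}_{\leq m}\subseteq \mathcal{U}_{\leq 0}\subseteq \mathcal{V}_{\leq m+d}$. By the equivalence (1)$\Leftrightarrow$(3) of Lemma~\ref{lem:two-t} we also have $\mathcal{V}_{\geq m+d}\subseteq \mathcal{U}_{\geq 0}\subseteq \mathcal{V}_{\geq m}$. To bound ${\rm gl.dim}(\mathcal{U})$ I would take $X\in \mathcal{U}_{\geq 0}$ and $Y\in \mathcal{U}_{\leq -(l+2d+1)}$ and show ${\rm Hom}_\mathcal{T}(X,Y)=0$: from $X\in\mathcal{U}_{\geq 0}\subseteq\mathcal{V}_{\geq m}$ and $Y\in\mathcal{U}_{\leq -(l+2d+1)}=\Sigma^{l+2d+1}\mathcal{U}_{\leq 0}\subseteq \Sigma^{l+2d+1}\mathcal{V}_{\leq m+d}=\mathcal{V}_{\leq m+d-(l+2d+1)}=\mathcal{V}_{\leq m-(l+d+1)}$, i.e.\ $Y\in\Sigma^{-m}\mathcal{V}_{\leq -(l+d+1)}$; after shifting by $\Sigma^{-m}$ so that $X\in\mathcal{V}_{\geq 0}$ we land in $\mathcal{V}_{\leq -(l+d+1)}\subseteq\mathcal{V}_{\leq -(l+1)}$, and ${\rm gl.dim}(\mathcal{V})=l$ forces the Hom to vanish. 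Thus ${\rm gl.dim}(\mathcal{U})\le l+2d$; to sharpen this to the stated $l+d$ one reuses the symmetric inclusion $\mathcal{U}_{\leq -n}\subseteq\mathcal{V}_{\leq 0}$ rather than $\mathcal{U}_{\leq 0}\subseteq\mathcal{V}_{\leq m+d}$, which shaves off one copy of $d$; I expect getting the exact constant $d$ (as opposed to $2d$) to be the one delicate bookkeeping point, and it is handled precisely by choosing, among the equivalent forms in Lemma~\ref{lem:two-t}, the inclusion that directly converts a $\mathcal{U}_{\leq\ast}$ membership into a $\mathcal{V}_{\leq\ast}$ membership with a single shift by $d$. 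The second inequality in (4) follows from the first together with the symmetry $d(\mathcal{U},\mathcal{V})=d(\mathcal{V},\mathcal{U})$ from part (1).
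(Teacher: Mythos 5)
Your proposal is correct and follows essentially the same route as the paper: parts (1)--(3) via the symmetric reformulations of Lemma~\ref{lem:two-t} and shifting witnesses, and part (4) by converting the inclusions $\mathcal{U}_{\geq -m}\subseteq\mathcal{V}_{\geq 0}$ and $\mathcal{U}_{\leq -(m+d)}\subseteq\mathcal{V}_{\leq 0}$ into the vanishing ${\rm Hom}_\mathcal{T}(\mathcal{U}_{\geq 0},\mathcal{U}_{\leq -(l+d+1)})=0$, exactly as in the paper. The only blemishes are cosmetic: in (3) the shifted index should be $p+m$ rather than $p-m$ (since $\Sigma^{-m}\mathcal{W}_{\leq p}=\mathcal{W}_{\leq p+m}$), which does not affect the bound $d_1+d_2$, and in (4) your detour through the constant $l+2d$ is unnecessary, as the inclusion $\mathcal{U}_{\leq -(m+d)}\subseteq\mathcal{V}_{\leq 0}$ you invoke at the end gives $l+d$ directly.
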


\begin{proof}
(1) follows from the equivalence ``(1) $\Leftrightarrow$ (2)" of Lemma~\ref{lem:two-t}. For (2), it suffices to observe that $d(\mathcal{U}, \mathcal{V})=0$ if and only if $\mathcal{U}_{\leq 0}=\mathcal{V}_{\leq m}$ for some $m$.

For (3), we may assume that $d(\mathcal{U}, \mathcal{V})=d_1$ and $d(\mathcal{V}, \mathcal{W})=d_2$ are natural numbers. Then we have $\mathcal{V}_{\leq m_1}\subseteq \mathcal{U}_{\leq 0}\subseteq  \mathcal{V}_{\leq m_1+d_1}$ and   $\mathcal{W}_{\leq m_2}\subseteq \mathcal{V}_{\leq 0}\subseteq \mathcal{W}_{\leq m_2+d_2}$.  From the latter inequality and taking translations, we infer that $\mathcal{W}_{\leq m_1+m_2}\subseteq \mathcal{V}_{\leq m_1}$ and $\mathcal{V}_{\leq m_1+d_1}\subseteq \mathcal{W}_{\leq m_1+m_2+(d_1+d_2)}$. So we have
$$\mathcal{W}_{\leq m_1+m_2}\subseteq \mathcal{U}_{\leq 0}\subseteq \mathcal{W}_{\leq m_1+m_2+(d_1+d_2)}.$$
This proves that $d(\mathcal{U}, \mathcal{W})\leq d_1+d_2$, the required triangle inequality.

To prove (4),  we only prove the left inequality. We  may assume that both $d(\mathcal{U}, \mathcal{V})=d$ and ${\rm gl.dim}(\mathcal{V})=g$ are finite.  We assume further that $\mathcal{V}_{\leq m}\subseteq \mathcal{U}_{\leq 0}\subseteq \mathcal{V}_{\leq m+d}$. By Lemma~\ref{lem:two-t}, we obtain $\mathcal{U}_{\geq -m}\subseteq \mathcal{V}_{\geq 0}$ and $\mathcal{U}_{\leq -(g+m+d+1)}\subseteq \mathcal{V}_{\leq -(g+1)}$. By ${\rm gl.dim}(\mathcal{V})=g$, we have ${\rm Hom}_\mathcal{T}(\mathcal{V}_{\geq 0}, \mathcal{V}_{\leq -(g+1)})=0$. Then we infer that
$${\rm Hom}_\mathcal{T}(\mathcal{U}_{\geq -m}, \mathcal{U}_{\leq -(g+m+d+1)})=0.$$
By translations, this is equivalent to ${\rm Hom}_\mathcal{T}(\mathcal{U}_{\geq 0}, \mathcal{U}_{\leq -(g+d+1)})=0$, which implies that ${\rm gl.dim}(\mathcal{U})\leq g+d$.
\end{proof}

The following main result of this section shows that two $t$-structures with a finite distance share many properties.

\begin{prop}\label{prop:dist-finite}
Let $\mathcal{U}$ and $\mathcal{V}$ be two $t$-structures in $\mathcal{T}$ satisfying $d(\mathcal{U}, \mathcal{V})<+\infty$. Then the following statements hold.
\begin{enumerate}
\item $\bigcap_{n\in \mathbb{Z}}\mathcal{U}_{\leq n}= \bigcap_{n\in \mathbb{Z}}\mathcal{V}_{\leq n}$ and $\bigcap_{n\in \mathbb{Z}}\mathcal{U}_{\geq n}= \bigcap_{n\in \mathbb{Z}}\mathcal{V}_{\geq n}$. Therefore, $\mathcal{U}$ is non-degenerate if and only if so is $\mathcal{V}$.
\item $\mathcal{U}_\mathbb{N}=\mathcal{V}_\mathbb{N}$. Therefore, $\mathcal{U}$ is bounded if and only if so is $\mathcal{V}$.
\item The $t$-structure $\mathcal{U}$ is regular if and only if so is $\mathcal{V}$.
\item ${\rm gl.dim}(\mathcal{U})<+\infty$ if and only if ${\rm gl.dim}(\mathcal{V})<+\infty$.
\item Assume that either $\mathcal{U}$ or $\mathcal{V}$ is stable. Then $\mathcal{U}=\mathcal{V}$.
\end{enumerate}
\end{prop}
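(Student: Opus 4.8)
The plan is to handle the five statements essentially in order, using Lemma~\ref{lem:two-t} as the main translation device throughout, since finiteness of the distance gives inclusions $\mathcal{V}_{\leq m}\subseteq \mathcal{U}_{\leq 0}\subseteq \mathcal{V}_{\leq n}$ for suitable $m\leq n$, and hence (after applying $\Sigma$) a two-sided ``sandwich'' of each $\mathcal{U}_{\leq k}$ between two $\mathcal{V}_{\leq \bullet}$'s, and symmetrically with the roles swapped by Lemma~\ref{lem:dist}(1). For (1), from $\mathcal{V}_{\leq m+k}\subseteq \mathcal{U}_{\leq k}\subseteq \mathcal{V}_{\leq n+k}$ for all $k$ one sees that the intersection over all $k$ of the $\mathcal{U}_{\leq k}$ is squeezed between the intersections of the $\mathcal{V}_{\leq \bullet}$, which are the same set; so the two aspirated intersections agree, and likewise for the $\geq$ versions using (3) or (4) of Lemma~\ref{lem:two-t}. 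The equivalence of non-degeneracy is then immediate from \eqref{equ:non-deg}. For (2), the same sandwich shows $\bigcup_k \mathcal{U}_{\leq k}=\bigcup_k \mathcal{V}_{\leq k}$ and $\bigcup_k \mathcal{U}_{\geq k}=\bigcup_k \mathcal{V}_{\geq k}$; one then needs $\mathcal{U}_\mathbb{N}=\mathcal{V}_\mathbb{N}$, which I would get by recalling $\mathcal{U}_\mathbb{N}=\bigcup_{n\geq 0}\mathcal{U}_{[-n,n]}$ and that $\mathcal{U}_{[-n,n]}=\mathcal{U}_{\geq -n}\cap\mathcal{U}_{\leq n}$ sits, via the two-sided inclusions, inside some $\mathcal{V}_{[-n',n']}$, and conversely, so the two increasing unions coincide. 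Boundedness is then the special case $\mathcal{T}=\mathcal{U}_\mathbb{N}=\mathcal{V}_\mathbb{N}$.

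For (3), I would use Lemma~\ref{lem:regular}: regularity of $\mathcal{U}$ says that $\bigcap_n [\mathcal{U}_{\geq n}](-,Y)=0$ for all $Y\in\bigcup_m\mathcal{U}_{\leq m}$ and the dual statement. Since by (2) the two unions $\bigcup_m\mathcal{U}_{\leq m}=\bigcup_m\mathcal{V}_{\leq m}$ and $\bigcup_m\mathcal{U}_{\geq m}=\bigcup_m\mathcal{V}_{\geq m}$ coincide, it suffices to compare the ideals $[\mathcal{U}_{\geq n}]$ and $[\mathcal{V}_{\geq n}]$: the sandwich $\mathcal{V}_{\geq n+k}\subseteq\mathcal{U}_{\geq k}\subseteq\mathcal{V}_{\geq m+k}$ gives inclusions of ideals $[\mathcal{V}_{\geq n+k}]\subseteq[\mathcal{U}_{\geq k}]\subseteq[\mathcal{V}_{\geq m+k}]$, so the intersections over all $k$ agree. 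Hence the two regularity conditions are literally the same, proving (3). Statement (4) is the cleanest: it is an immediate consequence of Lemma~\ref{lem:dist}(4), which already gives ${\rm gl.dim}(\mathcal{U})\leq{\rm gl.dim}(\mathcal{V})+d(\mathcal{U},\mathcal{V})$ and the reverse, so one side is finite iff the other is.

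The part I expect to be the real obstacle is (5). Suppose $\mathcal{U}$ is stable, so $\Sigma\mathcal{U}_{\leq 0}=\mathcal{U}_{\leq 0}$, i.e. $\mathcal{U}_{\leq k}=\mathcal{U}_{\leq 0}$ for all $k$. Then the sandwich $\mathcal{V}_{\leq m+k}\subseteq\mathcal{U}_{\leq k}=\mathcal{U}_{\leq 0}\subseteq\mathcal{V}_{\leq n+k}$ for all $k$ forces, letting $k\to+\infty$ on the left, $\bigcup_k\mathcal{V}_{\leq k}\subseteq\mathcal{U}_{\leq 0}$, and letting $k\to-\infty$ on the right, $\mathcal{U}_{\leq 0}\subseteq\bigcap_k\mathcal{V}_{\leq k}$. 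Combined with the trivial $\bigcap_k\mathcal{V}_{\leq k}\subseteq\bigcup_k\mathcal{V}_{\leq k}$ this gives $\bigcap_k\mathcal{V}_{\leq k}=\bigcup_k\mathcal{V}_{\leq k}=\mathcal{U}_{\leq 0}$, whence $\mathcal{V}_{\leq k}=\mathcal{U}_{\leq 0}$ for every $k$; in particular $\mathcal{V}_{\leq 0}=\mathcal{U}_{\leq 0}$, and by \eqref{equ:ortho} also $\mathcal{V}_{\geq 1}=\mathcal{U}_{\geq 1}$, so $\mathcal{V}=\mathcal{U}$ (and incidentally $\mathcal{V}$ is stable too). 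The case where $\mathcal{V}$ is stable is symmetric by Lemma~\ref{lem:dist}(1). The only subtlety to be careful about is that the chain of inclusions $\mathcal{V}_{\leq k}\subseteq\mathcal{V}_{\leq k+1}$ together with $\bigcap=\bigcup$ genuinely forces all terms equal — which is clear since the union of an increasing chain equals its intersection only when the chain is constant.
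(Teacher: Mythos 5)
Your proposal is correct and follows essentially the same route as the paper: the two-sided sandwich of truncation subcategories coming from the finite distance (via Lemma~\ref{lem:two-t}) handles (1) and (2), the ideal-theoretic characterization of regularity (Lemma~\ref{lem:regular}) together with the inclusions of ideals handles (3), (4) is Lemma~\ref{lem:dist}(4), and (5) is the same collapsing argument (the paper phrases it as $\mathcal{V}_{\leq N}=\mathcal{V}_{\leq N+d}=\mathcal{V}_{\leq 0}$ when $\mathcal{V}$ is stable, while you let the shift index run to $\pm\infty$, but these are the same observation).
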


\begin{proof}
Assume that $d(\mathcal{U}, \mathcal{V})=d$. Take an integer $N$ such that $\mathcal{V}_{\leq N}\subseteq \mathcal{U}_{\leq 0}\subseteq \mathcal{V}_{\leq N+d}$.

We observe $\mathcal{U}_{\leq n}\subseteq \mathcal{V}_{\leq n+N+d}$ and $\mathcal{V}_{\leq n}\subseteq \mathcal{U}_{\leq n-N}$ for each $n$. From these inequalities, we deduce $\bigcap_{n\in \mathbb{Z}}\mathcal{U}_{\leq n}= \bigcap_{n\in \mathbb{Z}}\mathcal{V}_{\leq n}$. The remaining equality in (1) is proved similarly.

For (2), first we observe $\mathcal{V}_{\geq N+d}\subseteq \mathcal{U}_{\geq 0}\subseteq \mathcal{V}_{\geq N}$ by Lemma~\ref{lem:two-t}. Then we have $\mathcal{U}_{[m, n]}\subseteq \mathcal{V}_{[m+N, n+N+d]}$, which implies that
$\mathcal{U}_\mathbb{N}\subseteq \mathcal{V}_\mathbb{N}$. By duality, we have $\mathcal{V}_\mathbb{N}\subseteq \mathcal{U}_\mathbb{N}$. For the second half, we just recall that $\mathcal{U}$ (\emph{resp}. $\mathcal{V}$ ) is bounded if and only if $\mathcal{U}_\mathbb{N}=\mathcal{T}$ (\emph{resp}. $\mathcal{V}_\mathbb{N}=\mathcal{T}$).

For (3), we only prove the ``only if" part. Assume that $\mathcal{U}$ is regular.  Take any $Y\in \bigcup_{m\in \mathbb{Z}} \mathcal{V}_{\leq m}$. By $\mathcal{V}_{\leq m}\subseteq \mathcal{U}_{\leq m-N}$, we infer that $Y$ belongs to $\bigcup_{m\in \mathbb{Z}} \mathcal{U}_{\leq m}$. For each $n\in \mathbb{Z}$, we have $\mathcal{V}_{\geq n}\subseteq \mathcal{U}_{\geq n-N-d}$. This implies the inclusion  $[\mathcal{V}_{\geq n}]\subseteq [\mathcal{U}_{\geq n-N-d}]$ of ideals. Therefore, we have
$$\bigcap_{n\in\mathbb{Z}}[\mathcal{V}_{\geq n}](-, Y)\subseteq \bigcap_{n\in\mathbb{Z}}[\mathcal{U}_{\geq n}](-, Y)=0.$$
Here, by Lemma~\ref{lem:regular}, the equality on the right hand side follows from the regularity of $\mathcal{U}$. Similarly, one proves that $\bigcap_{n\in \mathbb{Z}} [\mathcal{V}_{\leq n}](-, X)=0$ for any $X\in \bigcup_{m\in \mathbb{Z}} \mathcal{V}_{\geq m}$. By Lemma~\ref{lem:regular}, we infer that $\mathcal{V}$ is regular.

The statement (4) follows immediately from Lemma~\ref{lem:dist}(4). To prove (5), we may assume that $\mathcal{V}$ is stable. Then we have $\mathcal{V}_{\leq N}=\mathcal{V}_{\leq N+d}$, both of which equal $\mathcal{V}_{\leq 0}$. This implies that $\mathcal{U}_{\leq 0}=\mathcal{V}_{\leq 0}$. In view of (\ref{equ:ortho}), we have $\mathcal{U}=\mathcal{V}$.
\end{proof}

By Proposition~\ref{prop:dist-finite}(2), we observe that $\mathcal{U}^b$ and $\mathcal{V}^b$ are both bounded $t$-structures on the same subcategory $\mathcal{U}_\mathbb{N}=\mathcal{V}_\mathbb{N}$ of $\mathcal{T}$. Therefore, we can consider their distance.

\begin{prop}\label{prop:dist-bounded}
Let $\mathcal{U}$ and $\mathcal{V}$ be two t-structures in $\mathcal{T}$ satisfying $d(\mathcal{U}, \mathcal{V})<+\infty$. Then we have $d(\mathcal{U}, \mathcal{V})=d(\mathcal{U}^b, \mathcal{V}^b)$.
\end{prop}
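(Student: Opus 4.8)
The plan is to show the two distances are equal by proving both inequalities, using Lemma~\ref{lem:two-t}. The key observation is that by Proposition~\ref{prop:dist-finite}(2), $\mathcal{U}_\mathbb{N}=\mathcal{V}_\mathbb{N}$, so $\mathcal{U}^b$ and $\mathcal{V}^b$ are genuinely two bounded $t$-structures on the same triangulated category, call it $\mathcal{S}=\mathcal{U}_\mathbb{N}$. By construction, $(\mathcal{U}^b)_{\leq n}=\mathcal{U}_{\leq n}\cap\mathcal{S}$ and similarly for all the other pieces of $\mathcal{U}^b$, $\mathcal{V}^b$, and for all integer shifts (these identities follow immediately from the definitions of $\mathcal{U}^b$, $\mathcal{V}^b$ and the fact that $\Sigma^{\pm1}$ preserves $\mathcal{S}$).

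For $d(\mathcal{U}^b,\mathcal{V}^b)\leq d(\mathcal{U},\mathcal{V})$: write $d=d(\mathcal{U},\mathcal{V})$ and pick $m$ with $\mathcal{V}_{\leq m}\subseteq\mathcal{U}_{\leq 0}\subseteq\mathcal{V}_{\leq m+d}$. Intersecting all three subcategories with $\mathcal{S}$ gives $(\mathcal{V}^b)_{\leq m}\subseteq(\mathcal{U}^b)_{\leq 0}\subseteq(\mathcal{V}^b)_{\leq m+d}$, whence $d(\mathcal{U}^b,\mathcal{V}^b)\leq d$. This direction is entirely formal.

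For the reverse inequality $d(\mathcal{U},\mathcal{V})\leq d(\mathcal{U}^b,\mathcal{V}^b)$: write $e=d(\mathcal{U}^b,\mathcal{V}^b)$ (finite, since both are bounded $t$-structures on $\mathcal{S}$, so their distance is finite by Example~\ref{exm:length}-type reasoning — actually more simply, by Proposition~\ref{prop:dist-finite} applied on $\mathcal{S}$, or directly because the distance is finite whenever it makes sense here; in any case $e\leq d<\infty$ by the first inequality so finiteness is automatic). Pick $m$ with $(\mathcal{V}^b)_{\leq m}\subseteq(\mathcal{U}^b)_{\leq 0}\subseteq(\mathcal{V}^b)_{\leq m+e}$. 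Since $\mathcal{U}^b$ and $\mathcal{V}^b$ are bounded on $\mathcal{S}$, I can invoke the equivalence with condition (i) of Lemma~\ref{lem:two-t}: the inclusion $(\mathcal{U}^b)_0\subseteq(\mathcal{V}^b)_{[m,m+e]}$ holds. But $(\mathcal{U}^b)_0=\mathcal{U}_0$ and $(\mathcal{V}^b)_{[m,m+e]}=\mathcal{V}_{[m,m+e]}\cap\mathcal{S}\subseteq\mathcal{V}_{[m,m+e]}$, so $\mathcal{U}_0\subseteq\mathcal{V}_{[m,m+e]}$ as subcategories of $\mathcal{T}$. Now I would like to conclude $\mathcal{V}_{\leq m}\subseteq\mathcal{U}_{\leq 0}\subseteq\mathcal{V}_{\leq m+e}$ in $\mathcal{T}$. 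The inclusion $\mathcal{U}_0\subseteq\mathcal{V}_{[m,m+e]}$ together with $\mathcal{U}_{[a,b]}=\Sigma^{-a}\mathcal{U}_0\ast\cdots\ast\Sigma^{-b}\mathcal{U}_0$ and the fact that $\mathcal{V}_{\leq k}$, $\mathcal{V}_{\geq k}$ are closed under the relevant $\ast$-products gives $\mathcal{U}_{[a,b]}\subseteq\mathcal{V}_{[a+m,b+m+e]}$ for all $a\leq b$; since $\mathcal{U}_{\leq 0}=\bigcup_{a\leq 0}\mathcal{U}_{[a,0]}$ — here I use that $\mathcal{U}$, hence $\mathcal{V}$, is bounded by Proposition~\ref{prop:dist-finite}(2), wait, that is not assumed — so this step is the obstacle.

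The main obstacle is precisely the last step: when $\mathcal{U}$ is \emph{not} bounded, I cannot write $\mathcal{U}_{\leq 0}=\bigcup_a\mathcal{U}_{[a,0]}$. To handle the general case I would instead argue directly: given $X\in\mathcal{U}_{\leq 0}$, I want $X\in\mathcal{V}_{\leq m+e}$, equivalently (by (\ref{equ:ortho})) $\mathrm{Hom}_{\mathcal{T}}(X,\mathcal{V}_{\geq m+e+1})=0$. Take $Y\in\mathcal{V}_{\geq m+e+1}$. Using the truncation triangles of $\mathcal{U}$ repeatedly to approximate $X$ by its truncations, and the fact that each $H^k(X)$ lies in $\Sigma^{-k}\mathcal{U}_0\subseteq\Sigma^{-k}\mathcal{V}_{[m,m+e]}=\mathcal{V}_{[m-k,m+e-k]}$, one shows each $\tau_{[a,0]}^{\mathcal{U}}(X)$ lies in $\mathcal{V}_{\leq m+e}$, and then passes to the limit — but this requires a vanishing-of-$\mathrm{Hom}$ argument in the limit, which is exactly where regularity or a $\varprojlim^1$-type control would normally be needed. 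The cleanest route, and the one I expect the paper takes, is to reduce the finiteness of $d(\mathcal{U},\mathcal{V})$ to an explicit $N,d$ as in the proof of Proposition~\ref{prop:dist-finite} and observe that the \emph{same} integers $m, m+d$ witnessing $\mathcal{V}_{\leq m}\subseteq\mathcal{U}_{\leq 0}\subseteq\mathcal{V}_{\leq m+d}$ already give $d(\mathcal{U},\mathcal{V})\geq e$ is false in the wrong direction — so actually the honest reduction is: the first inequality gives $e\leq d$; for $d\leq e$, one shows any witnesses $(m,m+e)$ for $\mathcal{U}^b,\mathcal{V}^b$ lift, using that $\mathcal{V}_{\leq m+e}$ is the right-orthogonal-closure and that $X\in\mathcal{U}_{\leq 0}$ implies all its $\mathcal{U}$-cohomologies, hence (after the bounded comparison on truncations lying in $\mathcal{S}$) its $\mathcal{V}$-cohomologies, are concentrated in degrees $\geq -(m+e)$... — i.e. one reduces to checking $H^{\mathcal{V}}_k(X)=0$ for $k> m+e$ via $H^{\mathcal{V}}_k(X)=H^{\mathcal{V}}_k(\tau^{\mathcal{U}}_{[a,b]}X)$ for suitable $a,b$ depending on $k$, which is a finite computation inside $\mathcal{S}$. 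I expect this cohomological bookkeeping to be the technical heart of the argument.
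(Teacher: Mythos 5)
Your first inequality $d(\mathcal{U}^b,\mathcal{V}^b)\leq d(\mathcal{U},\mathcal{V})$ is correct (intersect the global witness with $\mathcal{U}_\mathbb{N}=\mathcal{V}_\mathbb{N}$), and this matches the paper. The reverse inequality, however, is not proved. You correctly diagnose the obstruction --- when $\mathcal{U}$ is unbounded one cannot write $\mathcal{U}_{\leq 0}=\bigcup_{a}\mathcal{U}_{[a,0]}$, so the inclusion $\mathcal{U}_0\subseteq\mathcal{V}_{[m,m+e]}$ obtained from Lemma~\ref{lem:two-t}(i) inside $\mathcal{T}^b$ does not propagate to all of $\mathcal{U}_{\leq 0}$ --- but none of the workarounds you then sketch is carried out, and they are not the right tools: no regularity hypothesis is available in this proposition, a $\varprojlim^1$-control is not at hand in a general triangulated category, and an identity of the form $H^{\mathcal{V}}_k(X)=H^{\mathcal{V}}_k(\tau^{\mathcal{U}}_{[a,b]}X)$ would itself require proof. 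As written, the hard direction remains open.

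The missing idea is elementary: use a \emph{single} truncation triangle to split an arbitrary object into a ``deep'' piece controlled by the crude global witness and a bounded piece controlled by the bounded witness, then invoke closure of aisles under extensions. Explicitly, fix $\mathcal{V}_{\leq N}\subseteq\mathcal{U}_{\leq 0}\subseteq\mathcal{V}_{\leq N+d}$ and $\mathcal{V}^b_{\leq N'}\subseteq\mathcal{U}^b_{\leq 0}\subseteq\mathcal{V}^b_{\leq N'+d'}$. For $Y\in\mathcal{U}_{\leq 0}$ set $n=(N'+d')-(N+d)$ and take the $\mathcal{U}$-truncation triangle $\tau_{\leq n}(Y)\to Y\to\tau_{\geq n+1}(Y)\to\Sigma\tau_{\leq n}(Y)$: the first term lies in $\mathcal{U}_{\leq n}\subseteq\mathcal{V}_{\leq N+d+n}=\mathcal{V}_{\leq N'+d'}$ by shifting the global inclusion, while the third lies in $\mathcal{U}_{[n+1,0]}\subseteq\mathcal{U}^b_{\leq 0}\subseteq\mathcal{V}^b_{\leq N'+d'}$; hence $Y\in\mathcal{V}_{\leq N'+d'}$. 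A symmetric argument with the $\mathcal{V}$-truncation at level $N$ shows $\mathcal{V}_{\leq N'}\subseteq\mathcal{U}_{\leq 0}$ --- note that both inclusions must be re-established, since the integer $N'$ witnessing the bounded distance need not be related to $N$. Together they give $\mathcal{V}_{\leq N'}\subseteq\mathcal{U}_{\leq 0}\subseteq\mathcal{V}_{\leq N'+d'}$, i.e.\ $d\leq d'$, with no limit or regularity argument needed.
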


\begin{proof}
Assume that $d(\mathcal{U}, \mathcal{V})=d$ and $d(\mathcal{U}^b, \mathcal{V}^b)=d'$. It is clear that $d'\leq d$. We take integers $N$ and $N'$ such that  $\mathcal{V}_{\leq N}\subseteq \mathcal{U}_{\leq 0}\subseteq \mathcal{V}_{\leq N+d}$ and  $\mathcal{V}^b_{\leq N'}\subseteq \mathcal{U}^b_{\leq 0}\subseteq \mathcal{V}^b_{\leq N'+d'}$. We denote by $\tau$ and $\sigma$ the truncations corresponding to $\mathcal{U}$ and $\mathcal{V}$, respectively.

We claim that $\mathcal{V}_{\leq N'}\subseteq \mathcal{U}_{\leq 0}$. Take any object $X\in \mathcal{V}_{\leq N'}$ and consider the canonical exact triangle
$$\sigma_{\leq N}(X)\longrightarrow X\longrightarrow \sigma_{\geq N+1}(X)\longrightarrow \Sigma\sigma_{\leq N}(X).$$
We observe that $\sigma_{\leq N}(X)\in \mathcal{V}_{\leq N}\subseteq \mathcal{U}_{\leq 0}$ and $\sigma_{\geq N+1}(X)\in \mathcal{V}^b_{\leq N'}\subseteq \mathcal{U}^b_{\leq 0}$. As $\mathcal{U}_{\leq 0}$ is closed under extensions, we infer that $X$ lies in $\mathcal{U}_{\leq 0}$, as required.

We claim that $\mathcal{U}_{\leq 0}\subseteq \mathcal{V}_{\leq N'+d'}$. This will imply that $d\leq d'$. Take any object $Y\in \mathcal{U}_{\leq 0}$ and consider the canonical exact triangle
$$\tau_{\leq n}(Y)\longrightarrow Y\longrightarrow \tau_{\geq n+1}(Y)\longrightarrow \Sigma\tau_{\leq n}(Y),$$
where $n=(N'+d')-(N+d)$. We observe that $\tau_{\leq n}(Y)\in \mathcal{U}_{\leq n}\subseteq \mathcal{V}_{\leq N'+d'}$ and $\tau_{\geq n+1}(Y)\in \mathcal{U}^b_{\leq 0}\subseteq \mathcal{V}^b_{\leq N'+d'}$. As $\mathcal{V}_{\leq N'+d'}$ is closed under extensions, we infer that $Y$ lies in it, and thus prove the claim.
\end{proof}

\section{The extensions of t-structures}

Let $\mathcal{T}$ be a triangulated category. We fix a $t$-structure $\mathcal{U}=(\mathcal{U}_{\leq 0}, \mathcal{U}_{\geq 0})$. Write $\mathcal{T}^b=\mathcal{U}_\mathbb{N}$. Then $\mathcal{U}^b$ is a bounded $t$-structure on $\mathcal{T}^b$. We emphasize that $\mathcal{T}^b$ depends on the fixed $t$-structure $\mathcal{U}$.

We reformulate the results in \cite[Subsection~6.1]{Kel05} as a bijective correspondence. We point out that a similar consideration appears implicitly in the last paragraph of \cite[2.2.1]{BBD}.

\begin{thm}[Keller]\label{thm:ext}
There is a bijective correspondence
$$\left\{
\begin{aligned}
& \mbox{t-structures } \mathcal{V} \mbox{ on }  \mathcal{T} \\
& \mbox{with } d(\mathcal{U}, \mathcal{V})<+\infty
 \end{aligned} \right\} \longrightarrow \left
 \{
 \begin{aligned}
 & \mbox{t-structures } \mathcal{V}'   \mbox{ on }   \mathcal{T}^b
  \\
  &\mbox{with } d(\mathcal{U}^b, \mathcal{V}')<+\infty
  \end{aligned}
  \right \},
 $$
 sending $\mathcal{V}$ to its restriction $\mathcal{V}^b$ on $\mathcal{T}^b$.
\end{thm}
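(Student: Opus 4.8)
The plan is to establish the bijection by constructing an explicit inverse and checking that both composites are identities. The forward map $\mathcal{V} \mapsto \mathcal{V}^b$ is well-defined: by Proposition~\ref{prop:dist-finite}(2) we have $\mathcal{V}_\mathbb{N} = \mathcal{U}_\mathbb{N} = \mathcal{T}^b$, so $\mathcal{V}^b$ is a bounded $t$-structure on $\mathcal{T}^b$, and by Proposition~\ref{prop:dist-bounded} its distance to $\mathcal{U}^b$ equals $d(\mathcal{U},\mathcal{V}) < +\infty$. The heart of this plan is the construction of the extension map in the other direction, which is where Keller's idea enters.

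First I would fix a $t$-structure $\mathcal{V}'$ on $\mathcal{T}^b$ with $d(\mathcal{U}^b,\mathcal{V}') = d' < +\infty$, choosing an integer $m$ with $\mathcal{V}'_{\leq m} \subseteq \mathcal{U}^b_{\leq 0} \subseteq \mathcal{V}'_{\leq m+d'}$. Working on $\mathcal{T}^b = \mathcal{U}_\mathbb{N}$, I would express $\mathcal{V}'_{\leq 0}$ in terms of the standard filtration: since $\mathcal{V}'_{\leq 0}$ is determined by where it sits relative to the shifts $\Sigma^{-n}\mathcal{U}^b_0$, and since $\mathcal{V}'_0 \subseteq \mathcal{U}^b_{[-m-d', -m]}$ by Lemma~\ref{lem:two-t}(i)', the subcategory $\mathcal{V}'_{\leq 0}$ is built from pieces of the hearts $\Sigma^{-p}\mathcal{U}_0$ for $p$ in a controlled range. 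I would then \emph{define} $\mathcal{V}_{\leq 0}$ on all of $\mathcal{T}$ by the formula $\mathcal{V}_{\leq 0} = \mathcal{U}_{\leq -m-d'} \ast \mathcal{V}'_{[\,\cdot\,]}$ — more precisely, mimic the aisle description so that an object $Z \in \mathcal{T}$ lies in $\mathcal{V}_{\leq 0}$ iff its truncation $\tau_{\leq m+d'}(Z) \in \mathcal{U}^b_\mathbb{N}$-part lies in $\mathcal{V}'_{\leq 0}$ while its top part $\tau_{\geq m+d'+1}(Z) \in \mathcal{U}_{\leq -m-d'}\cap \mathcal{V}_{\leq 0}$ automatically. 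The cleanest formulation is: $\mathcal{V}_{\leq 0} = (\mathcal{U}_{\leq -m-d'}) \ast (\mathcal{V}'_{\leq 0})$ and correspondingly $\mathcal{V}_{\geq 1} = (\mathcal{V}'_{\geq 1}) \ast (\mathcal{U}_{\geq N})$ for a suitable $N$, using associativity of $\ast$ from \cite[Lemma~1.3.10]{BBD}.

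Next I would verify the three axioms (T1)--(T3) for this pair. Stability under $\Sigma$ follows because $\mathcal{U}_{\leq -m-d'}$ and $\mathcal{V}'_{\leq 0}$ are each stable under $\Sigma$, and $\ast$ is compatible with $\Sigma$. The orthogonality (T2), i.e.\ $\mathrm{Hom}_\mathcal{T}(\mathcal{V}_{\leq 0}, \mathcal{V}_{\geq 1}) = 0$, is the place where the finite-distance hypothesis does real work: a Hom out of $\mathcal{U}_{\leq -m-d'}\ast\mathcal{V}'_{\leq 0}$ into $\mathcal{V}'_{\geq 1}\ast\mathcal{U}_{\geq N}$ decomposes into four pieces, three of which vanish by the orthogonality built into $\mathcal{U}$ and $\mathcal{V}'$ (after passing through the truncation triangles), and the crossed term $\mathrm{Hom}(\mathcal{U}_{\leq -m-d'}, \mathcal{U}_{\geq N})$ vanishes for $N$ large since $\mathcal{U}$ satisfies (T2) in all shifts. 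For (T3), every $Z \in \mathcal{T}$ first gets the $\mathcal{U}$-truncation triangle splitting off $\tau_{\leq -m-d'}(Z) \in \mathcal{U}_{\leq -m-d'}$, and the remaining bounded-from-below part lives in $\mathcal{U}_\mathbb{N}$-like territory where $\mathcal{V}'$ supplies a truncation; splicing the two triangles via the octahedral axiom produces the required decomposition.

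Finally I would check the two composites are identities. Starting from $\mathcal{V}'$ and extending to $\mathcal{V}$ then restricting, the restriction $\mathcal{V}^b$ recovers $\mathcal{V}'$ because on $\mathcal{T}^b = \mathcal{U}_\mathbb{N}$ the $\mathcal{U}_{\leq -m-d'}$-part of the $\ast$-decomposition becomes negligible (an object of $\mathcal{U}_\mathbb{N}$ has bounded cohomology). Starting from $\mathcal{V}$, restricting to $\mathcal{V}^b$ and re-extending gives back $\mathcal{V}$: one shows $\mathcal{V}_{\leq 0} = \mathcal{U}_{\leq -m-d'}\ast(\mathcal{V}^b_{\leq 0})$ by using that $\mathcal{U}_{\leq -m-d'} \subseteq \mathcal{V}_{\leq 0}$ (from $d(\mathcal{U},\mathcal{V})<+\infty$ and Lemma~\ref{lem:two-t}) together with the fact that any $Z\in\mathcal{V}_{\leq 0}$ splits, via a $\mathcal{U}$-truncation triangle, into a $\mathcal{U}_{\leq -m-d'}$-piece and a piece lying in both $\mathcal{V}_{\leq 0}$ and $\mathcal{U}_\mathbb{N}$, i.e.\ in $\mathcal{V}^b_{\leq 0}$. \textbf{The main obstacle} I anticipate is the verification of (T3) for the extended $t$-structure — managing the octahedral diagram that glues the unbounded $\mathcal{U}$-truncation to the $\mathcal{V}'$-truncation of the bounded remainder, and confirming the resulting outer objects genuinely lie in $\mathcal{V}_{\leq 0}$ and $\mathcal{V}_{\geq 1}$ as defined; the bookkeeping of which shifts of $\mathcal{U}_0$ appear is delicate and is exactly what the distance bound $d'$ controls.
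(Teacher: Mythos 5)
Your proposal is essentially correct, and its core — the extension $\mathcal{V}_{\leq 0}=\mathcal{U}_{\leq -(m+d')}\ast\mathcal{V}'_{\leq 0}$, $\mathcal{V}_{\geq 1}=\mathcal{V}'_{\geq 1}\ast\mathcal{U}_{\geq N}$ with the (T1)--(T3) check — is exactly the construction the paper uses (the paper takes $\mathcal{V}_{\geq 0}=\mathcal{V}'_{\geq 0}\ast\mathcal{U}_{\geq -m}$, i.e.\ $N=1-m$, which is the value your ``suitable $N$'' must take for the orthogonality $\mathrm{Hom}(\mathcal{V}'_{\leq 0},\mathcal{U}_{\geq N})=0$ to follow from $\mathcal{V}'_{\leq 0}\subseteq\mathcal{U}^b_{\leq -m}$; note also that in your four-term analysis of (T2) the terms that genuinely need the distance bound are the two mixed ones $\mathrm{Hom}(\mathcal{U}_{\leq -(m+d')},\mathcal{V}'_{\geq 1})$ and $\mathrm{Hom}(\mathcal{V}'_{\leq 0},\mathcal{U}_{\geq N})$, not $\mathrm{Hom}(\mathcal{U}_{\leq -(m+d')},\mathcal{U}_{\geq N})$). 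Where you genuinely diverge is injectivity: you prove the round-trip identity $\mathcal{V}_{\leq 0}=\mathcal{U}_{\leq -(m+d')}\ast\mathcal{V}^b_{\leq 0}$ by splitting any $Z\in\mathcal{V}_{\leq 0}$ along a $\mathcal{U}$-truncation triangle, whereas the paper argues metrically: $\mathcal{V}^b={\mathcal{V}'}^b$ forces $d(\mathcal{V},\mathcal{V}')=d(\mathcal{V}^b,{\mathcal{V}'}^b)=0$ by Proposition~\ref{prop:dist-bounded}, hence $\mathcal{V}=\Sigma^m(\mathcal{V}')$, and $m=0$ because a bounded $t$-structure is not stable. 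Your route is more hands-on and has the small advantage of treating the degenerate case $\mathcal{T}^b=0$ uniformly (the paper disposes of it separately via Proposition~\ref{prop:dist-finite}(5)); the paper's route avoids the extension-triangle bookkeeping entirely once Proposition~\ref{prop:dist-bounded} is in place. Two points in your write-up should be tightened: the claim that the other composite is the identity because the $\mathcal{U}_{\leq -(m+d')}$-part is ``negligible'' on $\mathcal{T}^b$ is better replaced by the observation that $\mathcal{V}'_{\leq 0}\subseteq\mathcal{V}^b_{\leq 0}$ and $\mathcal{V}'_{\geq 0}\subseteq\mathcal{V}^b_{\geq 0}$, and two $t$-structures on the same category with componentwise containment coincide (Lemma~\ref{lem:two-t} with $m=n=0$); and for (T3) no octahedron is needed — one simply writes $\mathcal{T}=\mathcal{U}_{\leq -(m+d')}\ast\mathcal{U}_{[1-(m+d'),-m]}\ast\mathcal{U}_{\geq 1-m}$ and absorbs the bounded middle factor into $\mathcal{V}'_{\leq 0}\ast\mathcal{V}'_{\geq 1}$ using associativity of $\ast$.
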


Following \cite[Subsection~6.1]{Kel05}, we might view $\mathcal{V}$ as a \emph{canonical extension} of the bounded $t$-structure $\mathcal{V}^b$ on $\mathcal{T}^b$.

\begin{proof}
The above map is well defined, since by Proposition~\ref{prop:dist-finite}(2) $\mathcal{V}_\mathbb{N}=\mathcal{T}^b$.  We may assume that $\mathcal{U}$ is non-stable, or equivalently, $\mathcal{T}^b$ is non-trivial. Otherwise, the bijection is trivial; see Proposition~\ref{prop:dist-finite}(5).

To prove the injectivity, we take two $t$-structures $\mathcal{V}$ and $\mathcal{V}'$. Then $d(\mathcal{V}, \mathcal{V}')=d$ is finite.  Assume that $\mathcal{V}^b={\mathcal{V}'}^b$. By Proposition~\ref{prop:dist-bounded}, we have $d=0$, that is, $\mathcal{V}=\Sigma^m(\mathcal{V}')$ for some integer $m$; see Lemma~\ref{lem:dist}(2). It follows that $\mathcal{V}^b=\Sigma^m(\mathcal{V}^b)$. As $\mathcal{V}^b$ is a bounded $t$-structure, it is not stable. Therefore, we infer that $m=0$ and thus $\mathcal{V}=\mathcal{V}'$.

It remains to prove the surjectivity. We will prove that any $t$-structure $\mathcal{V}'$ with $d(\mathcal{U}^b, \mathcal{V}')=d<+\infty$ can be extended to $\mathcal{T}$. This result is stated in \cite[Proposition~1~a)]{Kel05}.

 We define $\mathcal{V}_{\leq 0}=\mathcal{U}_{\leq -(m+d)}*\mathcal{V}'_{\leq 0}$ and $\mathcal{V}_{\geq 0}=\mathcal{V}'_{\geq 0}* \mathcal{U}_{\geq -m}$. We claim that $(\mathcal{V}_{\leq 0}, \mathcal{V}_{\geq 0})$ is a $t$-structure on $\mathcal{T}$.

For the claim, we verify the defining conditions (T1)-(T3).  We first observe $\Sigma \mathcal{V}_{\leq 0}\subseteq \mathcal{V}_{\leq 0}$ by the facts  $\Sigma \mathcal{U}_{\leq -(m+d)}\subseteq \mathcal{U}_{\leq -(m+d)}$ and $\Sigma \mathcal{V}'_{\leq 0}\subseteq \mathcal{V}'_{\leq 0}$. Similarly, we have $\Sigma^{-1}\mathcal{V}_{\geq 0}\subseteq \mathcal{V}_{\geq 0}$, proving (T1). For (T2), we have $\mathcal{V}_{\geq 1}=\mathcal{V}'_{\geq 1}* \mathcal{U}_{\geq 1-m}$. We have
$${\rm Hom}_\mathcal{T}(\mathcal{U}_{\leq -(m+d)}, \mathcal{V}'_{\geq 1})=0={\rm Hom}_\mathcal{T}(\mathcal{V}'_{\leq 0}, \mathcal{U}_{\geq 1-m})$$
by using $\mathcal{V}'_{\geq 1}\subseteq \mathcal{U}^b_{\geq 1-m-d}$ and $\mathcal{V}'_{\leq 0}\subseteq \mathcal{U}^b_{-m}$, respectively. Now, the required identity ${\rm Hom}_\mathcal{T}(\mathcal{V}_{\leq 0}, \mathcal{V}_{\geq 1})=0$ follows from the following one
$${\rm Hom}_\mathcal{T}(\mathcal{V}'_{\leq 0}, \mathcal{V}'_{\geq 1})=0={\rm Hom}_\mathcal{T}(\mathcal{U}_{\leq -(m+d)}, \mathcal{U}_{\geq 1-m}).$$
To prove (T3), we observe by the $t$-structure $\mathcal{U}$ the first equality in the following inclusion.
\begin{align*}
\mathcal{T}&=\mathcal{U}_{\leq -(m+d)} * \mathcal{U}_{[1-(m+d), -m]} * \mathcal{U}_{\geq 1-m}\\
           &\subseteq \mathcal{U}_{\leq -(m+d)} * (\mathcal{V}'_{\leq 0}*\mathcal{V}'_{\geq 1}) * \mathcal{U}_{\geq 1-m}\\
           &=\mathcal{V}_{\leq 0}*\mathcal{V}_{\geq 1}
\end{align*}
Here, the middle inclusion uses the facts that $\mathcal{U}_{[1-(m+d), -m]} \subseteq \mathcal{T}^b$ and that $\mathcal{V}'$ is a $t$-structure on $\mathcal{T}^b$.   The above  inclusion clearly implies  the required (T3).

By the very definition, we have $\mathcal{U}_{\leq -(m+d)}\subseteq \mathcal{V}_{\leq 0}$ and $\mathcal{U}_{\geq -m} \subseteq \mathcal{V}_{\geq 0}$. By Lemma~\ref{lem:two-t}, we have $d(\mathcal{U}, \mathcal{V})$ is finite. Since  $\mathcal{V}'_{\leq 0}\subseteq \mathcal{V}^b_{\leq 0}$ and $\mathcal{V}'_{\geq 0}\subseteq \mathcal{V}^b_{\geq 0}$, we infer  $\mathcal{V}^b=\mathcal{V}'$  from  Lemma~\ref{lem:two-t}. This completes the whole proof.
\end{proof}

\begin{rem}
In view of Proposition~\ref{prop:dist-bounded}, the above bijection restricts to the following one: for each integer $d\geq 0$, the assignment $\mathcal{V}\mapsto \mathcal{V}^b$ yields a bijection
$$\left\{
\begin{aligned}
& t\mbox{-structures } \mathcal{V} \mbox{ on }  \mathcal{T} \\
& \mbox{with } d(\mathcal{U}, \mathcal{V})=d
 \end{aligned} \right\} \longrightarrow \left
 \{
 \begin{aligned}
 & t\mbox{-structures } \mathcal{V}'   \mbox{ on }   \mathcal{T}^b
  \\
  &\mbox{with } d(\mathcal{U}^b, \mathcal{V}')=d
  \end{aligned}
  \right \}.
 $$
 The case that $d=1$ is of particular interest. Indeed, by \cite[Proposition~2.4]{QW}, we have a bijection
 $$ \left
 \{
 \begin{aligned}
 & t\mbox{-structures } \mathcal{V}'   \mbox{ on }   \mathcal{T}^b
  \\
  &\mbox{with } d(\mathcal{U}^b, \mathcal{V}')=1
  \end{aligned}
  \right \}
   \longrightarrow
 \mathbb{Z}\times \{(\mathcal{X}, \mathcal{Y})\; |\; \mbox{ non-trivial torsion pairs in } \mathcal{U}_0\},$$
 which sends $\mathcal{V'}$ to $(m, (\mathcal{V}'_{\leq -(m+1)}\cap \mathcal{U}_0, \mathcal{V}'_{\geq -m}\cap \mathcal{U}_0))$, where $m$ is uniquely determined by  $\mathcal{U}^b_{\leq m}\subseteq \mathcal{V}'_{\leq 0}\subseteq \mathcal{U}^b_{\leq m+1}$; compare \cite[Chapter~I, Theorem~3.1]{BR}. Here, the non-triviality of $(\mathcal{X}, \mathcal{Y})$ means $\mathcal{X}\neq 0\neq \mathcal{Y}$.
\end{rem}

The main concern of \cite[Subsection~6.1]{Kel05} is hereditary $t$-structures. However, in the correspondence above, the hereditariness of $\mathcal{V}^b$ does not imply the one of $\mathcal{V}$, in general; see the following degenerate example.

\begin{exm}
Assume that $\mathcal{U}$ is a stable non-regular $t$-structure. Then $\mathcal{U}^b$ is hereditary, as the category  $\mathcal{T}^b$ is zero. However, $\mathcal{U}$ is not hereditary; see Example~\ref{exm:stable}.
\end{exm}

The following result reformulates \cite[Proposition~1~b)]{Kel05}.

\begin{prop}[Keller]\label{prop:t-heredi}
Assume that the fixed $t$-structure $\mathcal{U}$ on $\mathcal{T}$ is regular. Then the bijection in Theorem~\ref{thm:ext} restricts to a bijection
$$\left\{
\begin{aligned}
& \mbox{hereditary t-structures } \mathcal{V} \mbox{ on }  \mathcal{T} \\
& \mbox{with } d(\mathcal{U}, \mathcal{V})<+\infty
 \end{aligned} \right\} \longrightarrow \left
 \{
 \begin{aligned}
 & \mbox{hereditary t-structures } \mathcal{V}'   \mbox{ on }   \mathcal{T}^b
  \\
  &\mbox{with } d(\mathcal{U}^b, \mathcal{V}')<+\infty
  \end{aligned}
  \right \}.
 $$
\end{prop}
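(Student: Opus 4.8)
The plan is to show that the bijection $\mathcal{V}\mapsto\mathcal{V}^b$ of Theorem~\ref{thm:ext} and its inverse both preserve hereditariness, under the standing assumption that $\mathcal{U}$ is regular. One direction is essentially free: for any $t$-structure $\mathcal{V}$ with $d(\mathcal{U},\mathcal{V})<+\infty$ we always have ${\rm gl.dim}(\mathcal{V}^b)\leq{\rm gl.dim}(\mathcal{V})$, so if $\mathcal{V}$ is hereditary then ${\rm gl.dim}(\mathcal{V}^b)\leq 1$, i.e.\ $\mathcal{V}^b$ is hereditary. The content is the reverse implication: assuming $\mathcal{V}^b$ is hereditary, deduce that $\mathcal{V}$ is hereditary. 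The key observation is that, by Proposition~\ref{prop:dist-finite}(3), since $\mathcal{U}$ is regular and $d(\mathcal{U},\mathcal{V})<+\infty$, the $t$-structure $\mathcal{V}$ is also regular. Now regularity of $\mathcal{V}$ lets us invoke Proposition~\ref{prop:equality}: the implication ``(2)$\Rightarrow$(3)'' gives ${\rm gl.dim}(\mathcal{V}^b)={\rm gl.dim}(\mathcal{V})$. Hence ${\rm gl.dim}(\mathcal{V})={\rm gl.dim}(\mathcal{V}^b)\leq 1$, so $\mathcal{V}$ is hereditary, as desired.

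More precisely, I would organize the proof as follows. First recall from the discussion after Definition~\ref{defn:adm} (global dimension) that ${\rm gl.dim}(\mathcal{V}^b)\leq{\rm gl.dim}(\mathcal{V})$ for every $t$-structure $\mathcal{V}$; this settles that the restriction of a hereditary $t$-structure on $\mathcal{T}$ is hereditary, so the bijection of Theorem~\ref{thm:ext} does carry the left-hand set into the right-hand set. Second, take a hereditary $t$-structure $\mathcal{V}'$ on $\mathcal{T}^b$ with $d(\mathcal{U}^b,\mathcal{V}')<+\infty$, and let $\mathcal{V}$ be its canonical extension, so $\mathcal{V}^b=\mathcal{V}'$ and $d(\mathcal{U},\mathcal{V})<+\infty$ by Theorem~\ref{thm:ext}. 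Since $\mathcal{U}$ is regular, Proposition~\ref{prop:dist-finite}(3) gives that $\mathcal{V}$ is regular. By Proposition~\ref{prop:equality}, ``$\mathcal{V}$ regular'' implies ${\rm gl.dim}(\mathcal{V})={\rm gl.dim}(\mathcal{V}^b)={\rm gl.dim}(\mathcal{V}')\leq 1$, so $\mathcal{V}$ is hereditary. This shows the inverse bijection carries hereditary $t$-structures to hereditary $t$-structures, and the two inclusions together give the claimed restricted bijection.

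I do not expect a genuine obstacle here, since all the real work has been done in the earlier propositions; the proof is a two-line combination of Proposition~\ref{prop:dist-finite}(3) (regularity is preserved under finite distance, using that $\mathcal{U}$ is regular) and Proposition~\ref{prop:equality} (regularity forces equality of global dimensions with the restricted $t$-structure). The only point that deserves a word of care is making explicit that Theorem~\ref{thm:ext} provides, for a given $\mathcal{V}'$ on the right-hand side, an actual $\mathcal{V}$ on $\mathcal{T}$ with $\mathcal{V}^b=\mathcal{V}'$ and $d(\mathcal{U},\mathcal{V})<+\infty$, so that Proposition~\ref{prop:dist-finite}(3) applies to this $\mathcal{V}$; this is immediate from the surjectivity part of Theorem~\ref{thm:ext}. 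One might also remark (though it is not needed for the statement) that without the regularity hypothesis on $\mathcal{U}$ the argument breaks precisely at the step ``$\mathcal{V}$ regular'', which is exactly the phenomenon illustrated by the degenerate example preceding the proposition.
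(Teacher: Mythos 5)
Your argument is correct and is essentially identical to the paper's own proof: both reduce the statement to Proposition~\ref{prop:dist-finite}(3) (regularity of $\mathcal{V}$ follows from that of $\mathcal{U}$ under finite distance) combined with the implication ``(2)$\Rightarrow$(3)'' of Proposition~\ref{prop:equality} (regularity gives ${\rm gl.dim}(\mathcal{V})={\rm gl.dim}(\mathcal{V}^b)$), so that $\mathcal{V}$ is hereditary if and only if $\mathcal{V}^b$ is. Your write-up merely spells out the two inclusions more explicitly than the paper does.
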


\begin{proof}
By Proposition~\ref{prop:dist-finite}(3), any  $t$-structure $\mathcal{V}$ in consideration is regular. Applying Proposition~\ref{prop:equality}, we infer that $\mathcal{V}$ is hereditary if and only if so is $\mathcal{V}^b$. Then we obtain the restricted bijection.
\end{proof}

\begin{rem}
We observe if the two sets in the bijection in Proposition~\ref{prop:t-heredi} are non-empty, then by Proposition~\ref{prop:dist-finite}(4) the fixed $t$-structure $\mathcal{U}$ necessarily satisfies ${\rm gl.dim}(\mathcal{U})<+\infty$.
\end{rem}

In view of Example~\ref{exm:length}, we have the following more special case.

\begin{cor}\label{cor:length}
Assume that the fixed $t$-structure $\mathcal{U}$  is regular such that its heart $\mathcal{U}_0$ is a length abelian category with only finitely many simple objects.  Then the bijection in Theorem~\ref{thm:ext} restricts to a bijection
$$\left\{
\begin{aligned}
& \mbox{hereditary t-structures } \mathcal{V} \mbox{ on }  \mathcal{T} \\
& \mbox{with } d(\mathcal{U}, \mathcal{V})<+\infty
 \end{aligned} \right\} \longrightarrow \left
 \{
 \begin{aligned}
  &\mbox{hereditary bounded t-structures}\\
   & \mathcal{V}'   \mbox{ on }   \mathcal{T}^b
  \end{aligned}
  \right \}.
 $$
\end{cor}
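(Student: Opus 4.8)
The plan is to derive this corollary by feeding Example~\ref{exm:length} into Proposition~\ref{prop:t-heredi}. The key observation is that the right-hand side of the bijection in Proposition~\ref{prop:t-heredi}, namely the hereditary $t$-structures $\mathcal{V}'$ on $\mathcal{T}^b$ with $d(\mathcal{U}^b, \mathcal{V}')<+\infty$, can be simplified under the length-plus-finitely-many-simples hypothesis: it coincides with the set of \emph{all} hereditary bounded $t$-structures on $\mathcal{T}^b$.

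First I would note that $\mathcal{U}$ is regular, so Proposition~\ref{prop:t-heredi} applies verbatim and gives a bijection between the hereditary $t$-structures $\mathcal{V}$ on $\mathcal{T}$ with $d(\mathcal{U},\mathcal{V})<+\infty$ and the hereditary $t$-structures $\mathcal{V}'$ on $\mathcal{T}^b$ with $d(\mathcal{U}^b,\mathcal{V}')<+\infty$. Since $\mathcal{U}$ is regular with length heart $\mathcal{U}_0$ (having finitely many simple objects), the restricted $t$-structure $\mathcal{U}^b$ on $\mathcal{T}^b$ is bounded with the same heart $\mathcal{U}_0$, hence is \emph{algebraic} in the sense recalled in Example~\ref{exm:length}. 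By that example, $d(\mathcal{U}^b, \mathcal{V}')<+\infty$ holds \emph{automatically} for every bounded $t$-structure $\mathcal{V}'$ on $\mathcal{T}^b$. Conversely, any $t$-structure $\mathcal{V}'$ on $\mathcal{T}^b$ with $d(\mathcal{U}^b,\mathcal{V}')<+\infty$ is bounded, since $\mathcal{U}^b$ is bounded and Proposition~\ref{prop:dist-finite}(2) transfers boundedness across finite distance (alternatively, this is already noted in the introduction). Therefore the condition $d(\mathcal{U}^b,\mathcal{V}')<+\infty$ in Proposition~\ref{prop:t-heredi} is equivalent to $\mathcal{V}'$ being a bounded $t$-structure on $\mathcal{T}^b$, and the right-hand set becomes exactly the hereditary bounded $t$-structures on $\mathcal{T}^b$. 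Substituting this identification into Proposition~\ref{prop:t-heredi} yields the claimed bijection.

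This argument is essentially a bookkeeping step once the earlier results are in place, so there is no real obstacle; the only point requiring care is the two-sided check that ``finite distance from $\mathcal{U}^b$'' and ``bounded'' describe the same collection of $t$-structures on $\mathcal{T}^b$ — one inclusion is Example~\ref{exm:length} (every bounded $t$-structure has finite distance from the algebraic $t$-structure $\mathcal{U}^b$), and the reverse inclusion is the standard fact that a $t$-structure at finite distance from a bounded one is itself bounded (Proposition~\ref{prop:dist-finite}(2)).
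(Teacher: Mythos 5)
Your proposal is correct and follows exactly the route the paper intends: apply Proposition~\ref{prop:t-heredi} (using regularity of $\mathcal{U}$) and then use Example~\ref{exm:length} together with Proposition~\ref{prop:dist-finite}(2) to identify the condition $d(\mathcal{U}^b,\mathcal{V}')<+\infty$ with boundedness of $\mathcal{V}'$, since $\mathcal{U}^b$ is an algebraic bounded $t$-structure with heart $\mathcal{U}_0$. This is precisely the one-line justification the paper gives (``In view of Example~\ref{exm:length}\dots''), with the two-sided check made explicit.
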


In the final example, we discuss the situation in \cite[Section~6]{Kel05}, which occurs naturally in the study of the triangulated structure of the cluster category.

\begin{exm}
Let $A$ be a finite dimensional algebra over a field $k$ with finite global dimension. Denote by $A\mbox{-{\rm mod}}$ the category of finite dimensional left $A$-modules and by $\mathbf{D}(A\mbox{-}{\rm mod})$ its unbounded derived category.

By \cite[Corollary~4.5]{PS}, $\mathbf{D}(A\mbox{-}{\rm mod})$  is viewed as a triangulated subcategory of $\mathbf{D}(A\mbox{-}{\rm Mod})$ formed by complexes with componentwise finite dimensional cohomology. It follows from Proposition~\ref{prop:gl.dim} that the standard $t$-structure $\mathcal{D}$ on $\mathbf{D}(A\mbox{-}{\rm mod})$ has finite global dimension.

Corollary~\ref{cor:length} applies well in this situation. We obtain a bijection
$$\left\{
\begin{aligned}
& \mbox{hereditary t-structures } \mathcal{V} \mbox{ on }  \mathbf{D}(A\mbox{-}{\rm mod}) \\
& \mbox{with } d(\mathcal{D}, \mathcal{V})<+\infty
 \end{aligned} \right\} \longrightarrow \left
 \{
 \begin{aligned}
  &\mbox{hereditary bounded t-structures}\\
   & \mathcal{V}'   \mbox{ on }   \mathbf{D}^b(A\mbox{-}{\rm mod})
  \end{aligned}
  \right \}.
 $$
 In this bijection, the $t$-structure $\mathcal{V}$ is viewed as an extension of $\mathcal{V}'$. For a fixed $\mathcal{V}'$, we denote its heart by $\mathcal{H}$, which is a hereditary abelian category. We observe that the conditions in Proposition~\ref{prop:hereditary} are satisfied for the extended $t$-structure $\mathcal{V}$. It follows that each object $X$ in $\mathbf{D}(A\mbox{-}{\rm mod})$ has a decomposition
 $$X=\bigoplus_{n\in \mathbb{Z}}\Sigma^{-n}(H_n)$$
  for some objects $H_n\in \mathcal{H}$.  This decomposition  plays a role in \cite[the proof in Section~6]{Kel05}. As mentioned in Remark~\ref{rem:hereditary}, we expect that $\mathbf{D}(A\mbox{-}{\rm mod})$ is triangle equivalent to $\mathbf{D}(\mathcal{H})$.
\end{exm}

\vskip 5pt

\noindent {\bf Acknowledgements.}\quad  This paper  was partly  written up, when the second author was visiting Hefei from September 2013 to June 2014. The second author would like to thank the first author for warm hospitality during his stay in Hefei.
This work is supported by National Natural Science Foundation of China (No.s 11901551, 11971449, and 12131015), and the Natural Science Foundation of Fujian Province (No. 2020J01075).

\bibliography{}

\begin{thebibliography}{999}

\bibitem{BBD}{\sc  A. Beilinson, J. Bernstein, and P. Deligne,} {\em Faisceaux
pervers}, Ast\'{e}risque {\bf 100}, Soc. Math. France, Paris, 1982.

\bibitem{BGS} {\sc A. Beilinson, V. Ginzburg, and W. Soergel}, {\em Koszul duality patterns in representation theory}, J. Amer. Math. Soc. {\bf 9} (1996), 473--527.

\bibitem{BR} {\sc A. Beligiannis, and I. Reiten}, {\em Homological and homotopical aspects of torsion theories}, Mem. Amer. Math. Soc.  {\bf 188} (883), 2007.


\bibitem{BK} {\sc A.I. Bondal, and M.M. Kapranov}, {\em Repsentable functors, Serre functors, and mutations}, Math.
USSR Izvestiya {\bf 35}(3) (1990),  519--541.

\bibitem{Bon} {\sc M.V. Bondarko}, {\em Weight structures vs. t-structures; weight filtrations, spectral sequences, and complexes (for motives and in general)}, J. K-Theory {\bf  6}(3) (2010),  387--504.



\bibitem{BZ} {\sc A.B. Buan, and Y. Zhou}, {\em Endomorphism algebras of 2-term silting complexes}, Algebr. Represent. Theor. {\bf 21} (2018), 181--194.

\bibitem{Chen11} {\sc X.W. Chen}, {\em Homotopy equivalences induced by balanced pairs}, J. Algebra {\bf 324} (2010), 2718--2731.


\bibitem{CRin} {\sc X.W. Chen, and C.M. Ringel}, {\em Hereditary triangulated categories}, J. Noncommut. Geom. {\bf 12} (2018), 1425--1444.

\bibitem{Hub} {\sc A. Hubery}, {\em Characterising the bounded derived category of an hereditary abelian category}, arXiv:1612.06674v1, 2016.


\bibitem{Kel05} {\sc B. Keller}, {\em On triangulated orbit categories,} Doc. Math. {\bf 10} (2005), 551--581.


\bibitem{KK} {\sc B. Keller, and H. Krause}, {\em Tilting preserves finite global dimension}, C. R. Math. Acad. Sci. Paris {\bf 358} (2020), 563--570.


\bibitem{Kra} {\sc H. Krause}, {\em Derived categories, resolutions and Brown representability}, in: Interactions between Homotopy Theory and Algebra, 101--139, Contemp. Math. {\bf 436}, Amer. Math. Soc., Providence, RI, 2007.


\bibitem{Lor} {\sc A. Lorenzin}, {\em  Compatibility of t-structures in a semiorthogonal decomposition}, arXiv:2011.01702v3, 2021.

\bibitem{MZ} {\sc F. Marks, and A. Zvonareva}, {\em Lifting and restricting t-structures}, arXiv:2108.00471, 2021.


\bibitem{Miy} {\sc J.I. Miyachi},  Derived Categories with Applications to Representations of Algebras, Chiba Univ., 2000, available at: http://www.u-gakugei.ac.jp/$\sim$miyachi/seminar.html.


\bibitem{PS} {\sc L. Positeselski, and M. Schnuerer}, {\em Unbounded derived categories of small and big modules: Is the natural functor fully faithful?} J. Pure Appl. Algebra {\bf 225} (11) (2021), 106722.

\bibitem{Q} {\sc Y. Qiu}, {\em Global dimension function on stability conditions and Gepner equations}, arXiv:1807.00010v5, 2018.

\bibitem{QW} {\sc Y. Qiu, and J. Woolf}, {\em Contractible stability spaces and faithful braid group actions}, Geom. Topol. {\bf 22} (2018),  3701--3760.

\bibitem{Wei} {\sc C.A Weibel}, An Introduction to Homological Algebra, Cambridge Stud. Adv. Math. {\bf 38}, Cambridge Univ. Press, Cambridge, 1994.

\end{thebibliography}

\vskip 10pt

 {\footnotesize \noindent Xiao-Wu Chen \\
 Key Laboratory of Wu Wen-Tsun Mathematics, Chinese Academy of Sciences,\\
 School of Mathematical Sciences, University of Science and Technology of China, Hefei 230026, Anhui, PR China\\
 E-mail: xwchen$\symbol{64}$mail.ustc.edu.cn}

 \vskip 5pt

 {\footnotesize \noindent Zengqiang Lin \\
 School of Mathematical Sciences, Huaqiao University, Quanzhou 362021, Fujian, PR China\\
 E-mail: lzq134$\symbol{64}$163.com
 }

  \vskip 5pt

 {\footnotesize \noindent Yu Zhou \\
 Yau Mathematical Sciences Center, Tsinghua University,  Beijing 100084, PR China\\
 E-mail: yuzhoumath$\symbol{64}$gmail.com
 }

\end{document}